\documentclass[12pt,final]{article}
\usepackage[margin=2.5cm,top=1.5cm]{geometry}
\usepackage[all,arc]{xy}
\usepackage{amsmath,amsthm,amssymb,enumerate}
\usepackage{color}
\newtheorem{thm}{Theorem}%[section]
%[section]
%[section]
%[section]
\newtheorem{con}[thm]{Conjecture}%[section]
\newtheorem{lem}[thm]{Lemma}%[section]
%[section]
\theoremstyle{definition}
%[section]
%[section]

\theoremstyle{Remark}
\newtheorem{rem}[thm]{Remark}%[section]

\newcommand{\Romannum}[1]{\uppercase\expandafter{\romannumeral #1}}

\numberwithin{equation}{section}
%\documentclass[12pt,final]{article}
%\usepackage[margin=2.5cm,top=1.5cm]{geometry}
%\pagestyle{headings}
%\usepackage[all,arc]{xy}
%\usepackage{amsmath,amsthm,amssymb,enumerate}
%\usepackage{color}
%\newtheorem{theo}{Theorem}[section]
%\newtheorem{thm}{Theorem}[subsection]
%\newtheorem{them}{Theorem}[section]
%\newtheorem{rem}{Remark}[subsection]
%\newtheorem{rema}{Remark}[section]
%\newtheorem{cor}{Corollary}[section]
%\newtheorem{coro}{Corollary}[subsection]
%\newtheorem{con}{Conjecture}[section]
%\newtheorem{lem}{Lemma}[section]
%\newtheorem{lemm}{Lemma}[subsection]
%\newtheorem{prop}{Proposition}[section]
%\theoremstyle{definition}
%\newtheorem{defn}{Definition}[section]
%\newtheorem{exam}{Example}[section]
%\newtheorem*{prob}{Question}
%\newtheorem{case}{Case}
%\newtheorem{step} {Step}
%\newcounter{rm}
%\theoremstyle{remark}
%\newtheorem*{notation}{Notation}
%\renewcommand {\thethem} {{\thesection}.\Alph{them}}
%\newcommand{\romannum}[1]{\romannumeral #1}
%\newcommand{\Romannum}[1]{\uppercase\expandafter{\romannumeral #1}}

%\numberwithin{equation}{section}
% MATH -------------------------------------------------------------------

\newcommand\keywordsname{Key words}
\newcommand\AMSname{AMS subject classifications}

\newenvironment{@abssec}[1]{%
     \if@twocolumn
       \section*{#1}%
     \else
       \vspace{.05in}\footnotesize
       \parindent .2in
         {\upshape\bfseries #1. }\ignorespaces
     \fi}
     {\if@twocolumn\else\par\vspace{.1in}\fi}

\begin{document}
\title{\vspace*{3cm} The Minimal Total Irregularity of Graphs\footnote{Research supported by the Zhujiang Technology New Star Foundation of
Guangzhou (No.2011J2200090), and Program on International Cooperation and Innovation, Department of Education,
Guangdong Province (No.2012gjhz0007).}}
\author{Yingxue Zhu\footnote{781722521@qq.com}\qquad
Lihua You\footnote{{\it{Corresponding author:\;}}ylhua@scnu.edu.cn}
\quad Jieshan Yang\footnote{jieshanyang1989@163.com}}
\vskip.2cm
\date{{\small
School of Mathematical Sciences, South China Normal University, Guangzhou, 510631, P.R. China\\
}} \maketitle
\vspace{-7mm}
\begin{abstract}
In \cite{2012a}, Abdo and Dimitov defined the total irregularity of a graph $G=(V,E)$ as

\hskip3.3cm $\rm irr_{t}$$(G) = \frac{1}{2}\sum_{u,v\in V}|d_{G}(u)-d_{G}(v)|,  $

\noindent where $d_{G}(u)$
denotes the vertex degree of a vertex $u\in V$.
In this paper, we investigate the minimal total irregularity of the connected graphs,
determine the minimal, the second minimal, the third minimal total irregularity  of trees, unicyclic graphs, bicyclic graphs on $n$ vertices,
and propose an open problem for further research.
%\vskip.2cm \noindent{\it{AMS classification:}} 05C50; 15A09; 15A48
 \vskip.2cm \noindent{\it{Keywords:}} total irregularity; minimal; tree; unicyclic graph;  bicyclic graph.
\end{abstract}

\baselineskip=0.30in
\section{Introduction}
\hskip.6cm Let $G=(V,E)$ be a simple undirected graph with vertex set $V$ and edge set $E$.
For any  vertices $v\in V$, the degree of a vertex $v$ in $G$, denoted by $d_{G}(v)$,
 is the number of edges of $G$ incident with $v$.
If $V=\{v_1,v_2,\ldots,v_n\}$, then the sequence $(d_G(v_1), d_G(v_2),\ldots, d_G(v_n))$ is called a degree sequence of $G$ (\cite{1976}).
Without loss of generality, we assume $d_G(v_1)\geq d_G(v_2)\geq\ldots\geq d_G(v_n)$.

A graph is regular if all its vertices have the same degree, otherwise it is irregular. Several approaches that characterize how irregular a graph is have been proposed. In \cite{1997}, Alberson defined the imbalance of an edge $e=uv\in E$ as $|d_{G}(u)-d_{G}(v)|$ and the irregularity of $G$ as

\vskip.2cm
\hskip5cm $\rm irr$$(G)=\sum\limits_{uv\in E}|d_{G}(u)-d_{G}(v)|.$ \hskip5cm $(1)$

\noindent More results on the imbalance, the irregularity of a graph $G$ can be found in  \cite{1997}-\cite{2005}.

Inspired by the structure and meaning of the equation (1), Abdo and Dimitov \cite{2012a} introduced a new irregularity measure, called the total irregularity. For a graph $G$, it is defined as

\vskip.2cm
\hskip4.8cm $\rm irr_{t}$$(G)=\frac{1}{2}\sum\limits_{u,v\in V}|d_{G}(u)-d_{G}(v)|.$ \hskip4.8cm $(2)$

Although the two irregularity measures capture the irregularity only by a single parameter, namely the degree of a vertex, the new measure is more superior than the old one in some aspects. For example, (2) has an expected property of an irregularity measure that graphs with the same degree sequences have the same total irregularity, while (1) does not have.
Both measures also have common properties, including that they are zero if and only if $G$ is regular.

Obviously, $\rm irr_{t}$$(G)$ is an upper bound of $\rm irr$$(G)$. In  \cite{2012c}, the authors derived relation between $\rm irr_{t}$$(G)$ and $\rm irr$$(G)$ for a connected graph $G$ with  $n$ vertices, that is,
$\rm irr_{t}$$(G)\leq n^2$$ \rm irr$$(G)/4.$
 Furthermore, they showed that $\rm irr_{t}$$(T)\leq (n-2)$$\rm irr$$(T)$ for any tree $T$.

Let $P_{n}$, $C_{n}$ and $S_{n}$ be the path, cycle and  star on $n$ vertices, respectively.
In \cite{2012a}, the authors obtained the upper bound of the  total irregularity among all graphs on $n$ vertices,
 and they showed the star graph $S_n$ is the tree with the maximal total irregularity among all trees on $n$ vertices.

\begin{thm}{\rm (\cite{2012a})} Let $G$ be a simple, undirected graph on $n$ vertices. Then

{\rm (1) }  $\rm irr_{t}$$(G)\leq \frac{1}{12}(2n^3-3n^2-2n+3).$

{\rm (2) } If $G$ is a tree, then $\rm irr_{t}$$(G)\leq (n-1)(n-2), $ with equality  holds if and only if $G\cong S_n$.
\end{thm}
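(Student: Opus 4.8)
The plan is to express ${\rm irr}_t(G)$ as a fixed linear functional of the sorted degree sequence and then optimize it, observing that part (2) needs only the crudest facts about a tree's degree sequence while part (1) genuinely requires graphicality. List the degrees in non‑increasing order $d_1\ge d_2\ge\cdots\ge d_n$. Since the unordered pair $\{i,j\}$ with $i<j$ contributes $d_i-d_j$ to $\tfrac12\sum_{u,v}|d_G(u)-d_G(v)|$, collecting terms gives
\[
{\rm irr}_t(G)=\sum_{1\le i<j\le n}(d_i-d_j)=\sum_{i=1}^{n}(n+1-2i)\,d_i .
\]
The coefficients $c_i:=n+1-2i$ are strictly decreasing and satisfy $\sum_{i=1}^n c_i=0$; these are the only two facts about them we will use.

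\textbf{Part (2).} A tree on $n\ge 2$ vertices has $d_n\ge 1$ and $\sum_i d_i=2(n-1)$. Write $d_i=1+e_i$ with $e_1\ge\cdots\ge e_n\ge 0$ and $\sum_i e_i=n-2$. Using $\sum_i c_i=0$,
\[
{\rm irr}_t(T)=\sum_{i=1}^{n}c_i(1+e_i)=\sum_{i=1}^{n}c_i e_i\le c_1\sum_{i=1}^n e_i=(n-1)(n-2),
\]
because $c_1=n-1$ is the largest coefficient and every $e_i\ge 0$. Equality forces $e_i=0$ whenever $c_i<c_1$, i.e.\ for all $i\ge 2$; hence $(d_1,\dots,d_n)=(n-1,1,\dots,1)$, whose unique tree realization is $S_n$, and one checks directly that ${\rm irr}_t(S_n)=(n-1)(n-2)$. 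Graphicality entered only through ``degrees $\ge 1$, sum $=2(n-1)$'', which is what makes this short.

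\textbf{Part (1).} Dropping the tree constraints changes the picture: the linear‑programming relaxation ``$0\le d_i\le n-1$, non‑increasing'' is maximized by $d_1=\cdots=d_{\lfloor n/2\rfloor}=n-1$ and the rest $0$, a wildly non‑graphic profile giving a bound of order $n^3/4$, far above $\tfrac1{12}(2n^3-3n^2-2n+3)$; so graphicality must be used. One natural route has three steps. (i) \emph{Monotonicity}: if a sorted sequence $\pi'$ majorizes a sorted sequence $\pi$ of the same sum, then $\sum_i c_i\pi_i'\ge\sum_i c_i\pi_i$, by a single Abel summation using only that the $c_i$ are non‑increasing (the sign pattern of the $c_i$ is irrelevant once the sums agree). (ii) \emph{Reduction to threshold graphs}: for a fixed number of edges $m$, the maximal elements of the dominance order on graphic sequences of sum $2m$ are exactly the degree sequences of threshold graphs; hence the degree sequence of any $G$ with $m$ edges is majorized by that of some threshold graph $G'$ with $m$ edges, so ${\rm irr}_t(G)\le {\rm irr}_t(G')$ by (i), and therefore $\max_G{\rm irr}_t(G)=\max\{{\rm irr}_t(G):G\text{ a threshold graph on }n\text{ vertices}\}$. (iii) \emph{Finite optimization}: a threshold graph is encoded by a binary creation sequence $b_1b_2\cdots b_n$ (with $b_1=0$, and $b_i=1$ meaning ``the $i$‑th vertex is added adjacent to all previous ones''), with $d(u_i)=(i-1)b_i+\#\{j>i:b_j=1\}$; substituting this into $\sum_i c_i d_i$ makes ${\rm irr}_t$ an explicit function of the block structure of the creation sequence, and maximizing it yields $\tfrac1{12}(2n^3-3n^2-2n+3)$. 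The symmetry ${\rm irr}_t(G)={\rm irr}_t(\overline{G})$ — immediate since $|d(u)-d(v)|=|(n-1-d(u))-(n-1-d(v))|$, and corresponding on creation sequences to reversing the string and flipping its bits — halves the case analysis.

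\textbf{Main obstacle.} Step (i) and the complement symmetry are routine, and (ii) rests on a standard (if not one‑line) fact about graphic partitions. The real work is (iii): the extremal threshold graph is \emph{not} unique — already for small $n$ several inequivalent creation sequences attain the maximum — so one cannot simply exhibit one candidate graph and verify it; one must either run an exchange argument driving every creation sequence to a canonical maximizer without decreasing ${\rm irr}_t$, or compute ${\rm irr}_t$ in closed form from the creation sequence's block lengths and carry out the resulting discrete optimization, keeping track that $\tfrac1{12}(2n^3-3n^2-2n+3)$ overshoots the true maximum by $\tfrac14$ when $n$ is even (so the inequality is strict there and attained only for odd $n$). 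I expect that bookkeeping to be the bulk of the argument.
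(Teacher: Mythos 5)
This theorem is quoted by the paper from \cite{2012a} without proof, so there is no in-paper argument to compare against; I am assessing your proposal on its own terms. The opening identity ${\rm irr}_t(G)=\sum_{i<j}(d_i-d_j)=\sum_i(n+1-2i)d_i$ for the sorted degree sequence is correct, and your proof of part (2) is complete and correct: the substitution $d_i=1+e_i$ together with $\sum_i c_i=0$ reduces the problem to $\sum_i c_ie_i\le c_1\sum_i e_i=(n-1)(n-2)$, and the equality analysis correctly pins down the degree sequence $(n-1,1,\dots,1)$, whose only tree realization is $S_n$. That is a clean, self-contained argument.

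Part (1), however, is a plan rather than a proof, and you say so yourself. Steps (i) and (ii) are sound: the Abel-summation monotonicity of $\sum_i c_i\pi_i$ under majorization is routine, and the reduction to threshold graphs rests on the standard fact that threshold sequences are the maximal graphic sequences of a given sum in the dominance order. But the entire quantitative content of the theorem --- that the maximum over threshold graphs on $n$ vertices is at most $\tfrac1{12}(2n^3-3n^2-2n+3)$ --- is concentrated in step (iii), which you do not carry out. Writing ${\rm irr}_t$ as an explicit function of the block lengths of the creation sequence and performing the discrete optimization (or running an exchange argument to a canonical maximizer) is precisely where the cubic polynomial, with its parity-dependent slack of $\tfrac14$, has to emerge; nothing in what you have written produces that constant. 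Until that computation is done, part (1) is unproved. As a secondary point, the claim that the extremal threshold graph is not unique is asserted from small cases but plays a structural role in your argument (it is your reason for rejecting the ``exhibit one candidate'' strategy), so it too would need to be substantiated in a finished write-up.
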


In \cite{2013}, the authors investigated  the total irregularity of  unicyclic graphs,
and determined the graph with the maximal total irregularity $n^2-n-6$ among all unicyclic graphs on $n$ vertices.
In \cite{2014}, the authors  investigated  the total irregularity of bicyclic graphs, and determined the graph with the maximal total irregularity $n^2+n-16$ among all bicyclic graphs on $n$ vertices.

Recently,   Abdo and Dimitrov (\cite{2013a}) also obtained the upper bounds on the total irregularity of graphs under several graph operations including join, lexicographic product, Cartesian product, strong product, direct product, corona product, disjunction and symmetric difference and so on.

In this paper, we introduce an important transformation to investigate the minimal total irregularity of graphs in Section 2,
determine the minimal, the second minimal, the third minimal total irregularity  of trees, unicyclic graphs, bicyclic graphs on $n$ vertices in Sections 3-5,
and propose an open problem for further research.

\section{Branch-transformation}
\hskip.6cm In this section, we introduce an important transformation to investigate the minimal total irregularity of  graphs on $n$ vertices.

Let $G$ be a  graph on $n$ vertices, $T$ be an induced subtree of $G$. We call $T$ is a hanging tree of $G$ if $G$ can be formed by connecting a vertex of $T$ and a vertex of $G-T$.

\noindent {\bf Branch-transformation: }
Let $G$ be a simple  graph with at least two pendent vertices. Without loss of generality, let $u$ be a vertex of $G$ with $d_{G}(u)\geq3$, $T$ be a hanging tree of $G$ connecting to $u$ with $|V(T)|\geq1$, and $v$ be a pendent vertex of $G$ with $v\notin T$. Let $G^{\prime}$ be the graph obtained from $G$ by deleting $T$ from vertex $u$ and attaching it to vertex $v$. We call the transformation from $G$ to $G^{\prime}$ is a branch-transformation on $G$ from vertex $u$ to vertex $v$ (see Figure 1).

\hskip.5cm
\begin{picture}(500,80)

\put(70,50){\circle*{2}}
\put(90,50){\circle*{2}}
\put(130,50){\circle*{2}}
\put(150,50){\circle*{2}}
\put(110,50){\circle{40}}
\put(166,50){\circle{30}}
\put(70,50){\line(1,0){20}}
\put(130,50){\line(1,0){20}}
\put(105,45){$G_{0}$}
\put(65,53){$v$}
\put(131,53){$u$}
\put(160,45){$T$}

\put(195,50){$\Longrightarrow$}

\put(260,50){\circle*{2}}
\put(280,50){\circle*{2}}
\put(300,50){\circle*{2}}
\put(340,50){\circle*{2}}
\put(320,50){\circle{40}}
\put(244,50){\circle{30}}
\put(260,50){\line(1,0){20}}
\put(280,50){\line(1,0){20}}
\put(315,45){$G_{0}$}
\put(280,53){$v$}
\put(341,53){$u$}
\put(240,45){$T$}
\put(110,15){$G$}
\put(280,15){$G^{\prime}$}

\put(70,0){Figure 1. branch-transformation on $G$ from $u$ to $v$}

\end{picture}

\begin{lem}\label{lem2}
Let $G^{\prime}$ be the graph obtained from $G$ by branch-transformation from $u$ to $v$. Then $\rm irr_{t}$$(G)>\rm irr_{t}$$(G^{\prime}).$
\end{lem}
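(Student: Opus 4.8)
The plan is to exploit the fact that the branch-transformation alters the degree of exactly two vertices. Put $a := d_G(u) \ge 3$, and note $d_G(v) = 1$ since $v$ is pendent. Let $w$ denote the unique vertex of $T$ that is joined to $u$ in $G$; in $G'$ this same vertex $w$ is joined to $v$, and the edge set is otherwise unchanged. First I would record the routine bookkeeping: $w \notin \{u,v\}$ because $u,v \notin V(T)$ while $w \in V(T)$, and $vw \notin E(G)$ because the only edge between $V(T)$ and $V(G)\setminus V(T)$ is $uw$, so the unique neighbour of the pendent vertex $v$ lies outside $T$. Hence $vw$ is a genuinely new edge, and $d_{G'}(x) = d_G(x)$ for every $x \notin \{u,v\}$, whereas $d_{G'}(u) = a-1$ and $d_{G'}(v) = 2$.

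Next I would write $\mathrm{irr}_t$ as the sum of $|d(x)-d(y)|$ over unordered pairs of distinct vertices (the factor $\tfrac12$ in $(2)$ accounts for this). Every pair inside $V(G)\setminus\{u,v\}$ contributes the same amount to $\mathrm{irr}_t(G)$ and to $\mathrm{irr}_t(G')$, so those terms cancel and
$$
\mathrm{irr}_t(G) - \mathrm{irr}_t(G') = \sum_{x\ne u,v}\bigl(|d_G(x)-a|-|d_G(x)-(a-1)|\bigr) + \sum_{x\ne u,v}\bigl(|d_G(x)-1|-|d_G(x)-2|\bigr) + \bigl(|a-1|-|a-3|\bigr).
$$
Because $a \ge 3$ the last bracket equals $2$. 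A short case check gives: the $x$-summand of the first sum is $+1$ if $d_G(x)\le a-1$ and $-1$ if $d_G(x)\ge a$; the $x$-summand of the second sum is $-1$ if $d_G(x)\le 1$ and $+1$ if $d_G(x)\ge 2$.

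To finish, partition $V(G)\setminus\{u,v\}$ into $A=\{x:d_G(x)\le 1\}$, $B=\{x:2\le d_G(x)\le a-1\}$, $C=\{x:d_G(x)\ge a\}$ — a genuine partition since $a\ge 3$ — with sizes $s,b,p$ respectively. Then the first sum equals $(s+b)-p$ and the second equals $(b+p)-s$, so
$$
\mathrm{irr}_t(G)-\mathrm{irr}_t(G') = (s+b-p) + (b+p-s) + 2 = 2b+2 \ge 2 > 0,
$$
which is exactly the claim. I do not anticipate a genuine obstacle: everything is elementary, the only point requiring care is the degree bookkeeping (in particular that $vw$ is not already an edge, so that no third vertex changes degree), and the one mildly pleasant feature is that the two counting sums — each of indefinite sign on its own — combine so that the contributions of $A$ and $C$ cancel, leaving $2b$; this also shows the drop in total irregularity is at least $2$.
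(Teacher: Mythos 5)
Your proof is correct and follows essentially the same route as the paper's: both arguments isolate the pair $\{u,v\}$ and the pairs containing exactly one of $u,v$, do the same sign analysis on the three degree classes, and arrive at the identical exact drop $2b+2$ (the paper's $2r+2$, with your $b$ equal to its $r$). Your extra bookkeeping that $vw\notin E(G)$, so no third vertex changes degree, is a point the paper leaves implicit.
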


\begin{proof}
Let $G=(V,E)$, $V_{1}=$\{$w| d_{G}(w)\geq d_{G}(u),w\in V$\}, $V_{2}=$\{$w| d_{G}(w)=1, w\in V$\}, $V_{3}=\{w| 2\leq d_{G}(w)<d_{G}(u), w\in V\}.$ Clearly, $u\in V_{1}$, $v\in V_{2}$, and $V_{1}\cup V_{2}\cup V_{3}=V$. Let $|V_{1}|=s$, $|V_{2}|=h$, $|V_{3}|=r$, then $s\geq 1$, $h\geq 2$ and  $s+h+r=n$.

Note that after  banch-transformation, only the degrees of $u$ and $v$
have been changed, namely, $d_{G^{\prime}}(u)=d_{G}(u)-1$, $d_{G^{\prime}}(v)=d_{G}(v)+1=2$ and $d_{G^{\prime}}(x)=d_{G}(x)$ for any $x\in V\backslash\{u,v\}$.
Let $U= V\backslash\{u,v\}$. Then

\hskip3.8cm $|d_{G^{\prime}}(u)-d_{G^{\prime}}(v)|-|d_{G}(u)-d_{G}(v)|=-2,$

 \hskip0.6cm $\sum\limits_{x\in U}(|d_{G^{\prime}}(u)-d_{G^{\prime}}(x)|
-|d_{G}(u)-d_{G}(x)|)=(s-1)-(r+h-1)=s-r-h,$

\hskip0.6cm  $\sum\limits_{x\in U}(|d_{G^{\prime}}(v)-d_{G^{\prime}}(x)|
-|d_{G}(v)-d_{G}(x)|)=-(s-1)-r+(h-1)=-s-r+h.$

 Thus, we have

$\rm irr_{t}$$(G^{\prime})-$$\rm irr_{t}$$(G)$

\noindent \hskip.2cm$=|d_{G^{\prime}}(u)-d_{G^{\prime}}(v)|
+\sum\limits_{x\in U}|d_{G^{\prime}}(u)-d_{G^{\prime}}(x)|
+\sum\limits_{x\in U}|d_{G^{\prime}}(v)-d_{G^{\prime}}(x)|$

\noindent \hskip.2cm $-(|d_{G}(u)-d_{G}(v)|
+\sum\limits_{x\in U}|d_{G}(u)-d_{G}(x)|
+\sum\limits_{x\in U}|d_{G}(v)-d_{G}(x)|)$

\noindent \hskip.2cm $=(|d_{G^{\prime}}(u)-d_{G^{\prime}}(v)|-|d_{G}(u)-d_{G}(v)|)$
 $+\sum\limits_{x\in U}(|d_{G^{\prime}}(u)-d_{G^{\prime}}(x)|-|d_{G}(u)-d_{G}(x)|)$

\noindent \hskip.2cm $+\sum\limits_{x\in U}(|d_{G^{\prime}}(v)-d_{G^{\prime}}(x)|-|d_{G}(v)-d_{G}(x)|)$

\noindent \hskip.2cm $=-2+(s-r-h)+(-s-r+h)$

\noindent \hskip.2cm $=-2r-2$

\noindent \hskip.2cm $<0$.
\end{proof}

\begin{rem}\label{rem3}
 Let $G^{\prime}$ be the graph obtained from $G$ by branch-transformation from $u$ to $v$. Then by branch-transformation and Lemma \ref{lem2},
we have $d_{G^{\prime}}(u)=d_{G}(u)-1\geq 2$ and $d_{G^{\prime}}(v)=d_{G}(v)+1=2$, namely,
$|\{w|d_{G^{\prime}}(w)=1, w\in V\}|=|\{w|d_G(w)=1, w\in V\}|-1$.
If $d_{G^{\prime}}(u)\geq 3$,  $G^{\prime}$ has at least two pendent vertices, and there exists a hanging tree of $G^{\prime}$ connecting to the vertex $u$,
 we can repeat branch-transformation on $G^{\prime}$ from the vertex $u$,  till the degree of $u$ is equal to 2, or there is only one pendent vertex in the resulting graph,  or there is not any hanging tree  connecting to the  vertex $u$.
\end{rem}

From the above arguments, we see that we can do branch-transformation on $G$  if and only if the following three conditions hold:

{\rm (1) } there exists a vertex $u$ with $d_G(u)\geq 3$;

{\rm (2) } there exists a hanging tree of $G$ connecting to the vertex $u$;

{\rm (3) } $G$ has at least two pendent vertices.

\section{The minimal total irregularity of trees}
\hskip.6cm In this section, we determine the minimal, the second minimal, the third minimal total irregularity of trees on $n$ vertices
and characterize the extremal graphs.

\begin{lem}\label{lem4}{\rm (\cite{1976})}
Let $G=(V, E)$ be a graph and $|E|=m$. Then $\sum\limits_{v\in V}d_G(v)=2m$.
\end{lem}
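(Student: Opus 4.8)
The plan is to prove the identity by double counting incidences between vertices and edges. I would introduce the incidence set $I = \{(v,e) : v \in V,\ e \in E,\ v \text{ is an endpoint of } e\}$ and compute $|I|$ in two different ways, once by summing over edges and once by summing over vertices.

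First I would count $|I|$ by grouping the pairs according to their first coordinate: for a fixed vertex $v$, the pairs $(v,e) \in I$ are exactly those with $e$ incident to $v$, and by the definition of vertex degree there are precisely $d_G(v)$ of them. Hence $|I| = \sum_{v \in V} d_G(v)$. Next I would count $|I|$ by grouping according to the second coordinate: for a fixed edge $e$, since $G$ is a simple (in particular, loopless) graph, $e$ has exactly two distinct endpoints, so there are exactly $2$ pairs $(v,e) \in I$ with that second coordinate. Summing over the $m$ edges gives $|I| = 2m$. Equating the two expressions yields $\sum_{v \in V} d_G(v) = 2m$, as claimed.

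There is essentially no obstacle here, since this is a classical fact (the handshaking lemma), and the only point requiring any care is the observation that each edge of a simple graph contributes exactly $2$ to the incidence count — loops, if they were allowed, would contribute $2$ as well under the usual convention, but in the present setting the graph is simple so this subtlety does not arise. Alternatively, one could give an equivalent one-line argument by writing $\sum_{v \in V} d_G(v) = \sum_{v \in V} \sum_{e \in E} [\,v \in e\,] = \sum_{e \in E} \sum_{v \in V} [\,v \in e\,] = \sum_{e \in E} 2 = 2m$, interchanging the order of summation over the finite index sets $V$ and $E$.
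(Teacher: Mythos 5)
Your double-counting argument is correct and is the standard proof of the handshaking lemma; the paper itself gives no proof, simply citing the result from Bondy and Murty, so there is nothing to compare against. Your remark about loops is fine but moot here, since the paper restricts attention to simple graphs throughout.
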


Let $G=(V,E)$ be a tree. Then for any vertex $u\in V$,  $d_G(u)\geq 2$ implies there must exist a hanging tree of $G$ connecting to the vertex $u$,
thus we can obtain the following results by branch-transformation.

\begin{thm}\label{thm5}
Let $G=(V,E)$ be a tree on $n$ vertices. Then $\rm irr_{t}$$(G)\geq 2n-4$,
and the equality holds if and only if $G\cong P_{n}$.
\end{thm}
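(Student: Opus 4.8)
The plan is to combine the Branch-transformation (Lemma~\ref{lem2}) with a direct computation of $\rm irr_t$ for the path $P_n$. First I would compute $\rm irr_t(P_n)$ directly: for $n\ge 2$ the degree sequence of $P_n$ has exactly two vertices of degree $1$ and $n-2$ vertices of degree $2$, so the only unordered pairs $\{u,v\}$ contributing to the sum are those pairing a leaf with an internal vertex, giving $2(n-2)$ pairs each of imbalance $1$; hence $\rm irr_t(P_n)=2n-4$. (For $n=1,2$ the bound $2n-4\le 0$ is trivially satisfied, so one may assume $n\ge 3$.) This establishes that the claimed value is attained.

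Next I would show $\rm irr_t(G)\ge 2n-4$ for every tree $G$ on $n$ vertices, and that $P_n$ is the unique minimizer. The key point, as noted in the sentence preceding the theorem, is that in a tree any vertex $u$ with $d_G(u)\ge 2$ has a hanging tree attached to it: indeed, deleting an edge incident to $u$ splits the tree into two components, and the component not containing $u$ (together with its attaching edge/vertex) is a hanging tree of $G$ at $u$. So if $G$ is not a path, it has a vertex $u$ with $d_G(u)\ge 3$; since $G$ is a tree on $n\ge 3$ vertices it has at least two pendent vertices, so there is a pendent vertex $v\notin T$ for a suitable hanging tree $T$ at $u$, and all three conditions for a Branch-transformation are met. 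By Lemma~\ref{lem2}, the resulting graph $G'$ satisfies $\rm irr_t(G)>\rm irr_t(G')$. Moreover $G'$ is again a tree on $n$ vertices (Branch-transformation moves a subtree without changing the vertex count or creating a cycle, and connectivity is preserved). Iterating — using Remark~\ref{rem3} to see that each application strictly reduces the number of pendent vertices, or more simply that $\rm irr_t$ strictly decreases and takes values in a discrete set — the process terminates, and it can only terminate at a tree with no vertex of degree $\ge 3$, i.e.\ at $P_n$. Hence $\rm irr_t(G)\ge \rm irr_t(P_n)=2n-4$, with strict inequality whenever $G\not\cong P_n$ since at least one Branch-transformation was performed.

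To make the termination and uniqueness argument airtight I would phrase the induction on the number of vertices of degree at least $3$, or on the number of pendent vertices: Remark~\ref{rem3} guarantees that repeated Branch-transformations from the same vertex $u$ eventually bring $d(u)$ down to $2$, strictly decreasing the count of branch vertices; since this count is a nonnegative integer, after finitely many steps we reach a tree with maximum degree $2$, which is $P_n$. Each step strictly decreases $\rm irr_t$, so the starting tree has $\rm irr_t$ strictly larger than $2n-4$ unless no step was needed, i.e.\ unless $G\cong P_n$ already. The main obstacle is purely bookkeeping: one must verify carefully that Branch-transformation keeps the graph a tree (no new cycle, still connected, same order) so that Lemma~\ref{lem2} may be reapplied, and that the chosen pendent vertex $v$ can always be taken outside the hanging tree $T$ — this is where having $\ge 2$ pendent vertices in a tree on $n\ge 3$ vertices is used. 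None of these steps is deep; the real content is Lemma~\ref{lem2}, which is already available.
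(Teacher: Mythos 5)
Your proposal is correct and follows essentially the same route as the paper: both reduce an arbitrary non-path tree to $P_n$ by repeated branch-transformations, invoke Lemma~\ref{lem2} for the strict decrease of $\rm irr_{t}$ at each step, and verify $\rm irr_{t}(P_n)=2n-4$ directly. The only difference is bookkeeping — the paper uses the degree-sum identity to count exactly $h-2$ transformations, while you argue termination via a monovariant — which does not change the substance of the argument.
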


\begin{proof}
Clearly, $2(n-1)=\sum\limits_{v\in V}d_G(v)$ by Lemma \ref{lem4}.
 Let $s=|\{w|d_{G}(w)\geq 3, w\in V\}|$, and $h=|\{w|d_{G}(w)=1, w\in V\}|$.
 Then  $s\geq 0$ and $h\geq 2$. Let $\triangle{(G)}$ be the maximum degree of the vertices of $G$.
  Now  we complete the proof by the following two cases.

\noindent {\bf Case 1: }  $s=0$.

Then $h=2$ by $2(n-1)=\sum\limits_{v\in V}d_G(v)=2(n-h)+h$, and the degree sequence of $G$ is $(2,\ldots,2,1,1)$.
Thus $G\cong P_{n}$ and $\rm irr_{t}$$(G)=2n-4$.

\noindent {\bf Case 2: }  $s\geq 1$.

Then  $\triangle(G)\geq 3$ by $s\geq 1$, and $h\geq \triangle(G)+s-1\geq 3$ by $2(n-1)=\sum\limits_{v\in V}d_G(v)\geq \triangle(G)+3(s-1)+2(n-s-h)+h$.
So we can do branch-transformation $h-2$ times  on $G$  till the degree sequence of the resulting  graph  is $(2, \ldots, 2, 1, 1)$, denoted by $H_1$,
and thus   $\rm irr_{t}$$(G)>$$\rm irr_t$$(H_1)=2n-4$ by Lemma \ref{lem2}.
\end{proof}

\begin{thm}\label{thm6}
Let $n\geq 5$, $G=(V,E)$ be a tree on $n$ vertices and $G \ncong P_{n}$. Then $\rm irr_{t}$$(G)\geq 4n-10$,
and the equality holds if and only if  the degree sequence of $G$ is $(3, 2, \ldots, 2, 1, 1, 1)$.
\end{thm}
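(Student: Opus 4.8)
The plan is to follow the same degree-sequence strategy used in Theorem~\ref{thm5}, but now one rung up the ladder. Write $2(n-1)=\sum_{v\in V}d_G(v)$ by Lemma~\ref{lem4}, set $s=|\{w:d_G(w)\geq 3\}|$ and $h=|\{w:d_G(w)=1\}|$, and recall from the proof of Theorem~\ref{thm5} that $s\geq 1$ forces $h\geq \triangle(G)+s-1\geq 3$. Since $G\ncong P_n$ we are in the case $s\geq 1$. The idea is that whenever $\triangle(G)\geq 4$, or $s\geq 2$, the tree has "more" high-degree structure than the minimal non-path tree, so a branch-transformation strictly decreases $\mathrm{irr}_t$; iterating, one should be able to reach a tree whose degree sequence is exactly $(3,2,\dots,2,1,1,1)$, and by Lemma~\ref{lem2} the original $\mathrm{irr}_t(G)$ is at least the value on that terminal sequence. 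So two things must be done: (i) compute $\mathrm{irr}_t$ of the sequence $(3,2,\dots,2,1,1,1)$ and check it equals $4n-10$; (ii) show that, starting from any tree with $G\ncong P_n$, repeated branch-transformations (and, where needed, a final observation that the sequence cannot be pushed lower) land on $(3,2,\dots,2,1,1,1)$ or on something with strictly larger irregularity.

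For step (i): a tree with this degree sequence has $n-4$ vertices of degree $2$, one of degree $3$, three of degree $1$. Counting pairs, $|d(x)-d(y)|$ contributes $2$ for each pair $\{$deg-$3$, deg-$1\}$ (there are $3$ such), $1$ for each pair $\{$deg-$3$, deg-$2\}$ (there are $n-4$), $1$ for each pair $\{$deg-$2$, deg-$1\}$ (there are $3(n-4)$), and $0$ otherwise; so $\mathrm{irr}_t = 2\cdot 3 + 1\cdot(n-4) + 1\cdot 3(n-4) = 6 + 4(n-4) = 4n-10$, as claimed. Note this is independent of which tree realizes the sequence, which is exactly the good property of $\mathrm{irr}_t$ emphasized in the introduction — hence the "if and only if" is phrased in terms of the degree sequence rather than a specific graph.

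For step (ii), the structural reduction: by the remark following Lemma~\ref{lem2}, a branch-transformation from $u$ (with $d_G(u)\geq 3$) lowers the number of pendent vertices by one and decreases $d(u)$ by one, and in a tree condition (2) — existence of a hanging tree at $u$ — is automatic whenever $d_G(u)\geq 2$, while condition (3) holds as long as there are $\geq 2$ pendents. If $\triangle(G)\geq 4$: do branch-transformations at a maximum-degree vertex until its degree drops to $3$, then (if other vertices of degree $\geq 3$ remain, or pendents remain above three) continue at other high-degree vertices; since each step strictly decreases $\mathrm{irr}_t$ and decreases $\sum_w(d_G(w)-2)^+$, this terminates at a tree with $s=1$, $\triangle=3$, i.e. degree sequence $(3,2,\dots,2,1,1,1)$ — and that tree has $\mathrm{irr}_t=4n-10<\mathrm{irr}_t(G)$. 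If $\triangle(G)=3$ but $s\geq 2$: again branch-transform to reduce $s$ to $1$, strictly decreasing $\mathrm{irr}_t$. The only remaining case is $\triangle(G)=3$, $s=1$, which is precisely the claimed extremal degree sequence (then automatically $h=3$ from the degree sum), giving equality. I expect the main obstacle to be the bookkeeping in the induction: one must argue carefully that branch-transformations can always be \emph{chosen} (a hanging tree at the chosen $u$ disjoint from some pendent $v\neq$ its endpoint genuinely exists in a tree with $\geq 3$ pendents) and that the process cannot stall before reaching $(3,2,\dots,2,1,1,1)$ — handling the small-$n$ boundary (why $n\geq5$ is needed, since for $n=4$ the only non-path tree is $S_4$ with sequence $(3,1,1,1)$ and $4n-10=6$) and making precise that the terminal degree sequence is forced rather than merely achievable.
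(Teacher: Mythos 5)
Your proposal is correct and follows essentially the same route as the paper: the paper also verifies that the degree sequence $(3,2,\ldots,2,1,1,1)$ (equivalently $s=1$, $\triangle(G)=3$, forcing $h=3$) yields $\mathrm{irr}_t=4n-10$, and in all other cases ($s+\triangle(G)\geq 5$) uses the degree-sum bound $h\geq\triangle(G)+s-1\geq 4$ to apply $h-3$ branch-transformations reaching that sequence, so Lemma~\ref{lem2} gives strict inequality. Your explicit pair-counting for step (i) and the monotone quantity $\sum_w(d_G(w)-2)^+$ for termination are just slightly more detailed versions of what the paper does implicitly.
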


\begin{proof}
It is obvious that $2(n-1)=\sum\limits_{v\in V}d_G(v)$ by Lemma \ref{lem4}.
 Let $s=|\{w|d_{G}(w)\geq 3, w\in V\}|$, and $h=|\{w|d_{G}(w)=1, w\in V\}|$. Then  $\triangle{(G)}\geq 3$ and $s\geq1$ since $G \ncong P_{n}$.
 Now  we complete the proof by the following two cases.

\noindent {\bf Case 1: }  $s+\triangle(G)=4$.

 Clearly, $s=1, \triangle(G)=3$. Then $h=3$ by $2(n-1)=\sum\limits_{v\in V}d_G(v)=3+2(n-1-h)+h$, and the degree sequence of $G$ is $(3,2,\ldots,2,1,1,1)$.
Thus $\rm irr_{t}$$(G)=4n-10$.

\noindent {\bf Case 2: }  $s+\triangle(G)\geq 5$.

 Then $h\geq \triangle(G)+s-1\geq4$ by $2(n-1)=\sum\limits_{v\in V}d_G(v)\geq \triangle(G)+3(s-1)+2(n-s-h)+h$.
Now we can do branch-transformation $h-3$ times on $G$ till the  degree sequence of the resulting graph is $(3,2,\ldots,2,1,1,1)$,
denoted by $H_2$, and thus   $\rm irr_{t}$$(G)>$$\rm irr_t$$(H_2)=4n-10$ by Lemma \ref{lem2}.
\end{proof}

\begin{thm}\label{thm7}
Let $n\geq 6$, $G=(V,E)$ be a tree on $n$ vertices, $G\ncong P_n.$ If the sequence $(3,2,\ldots,2,1,1,1)$ is not the degree sequence of $G,$
then  $\rm irr_{t}$$(G)\geq 6n-20$,
and the equality holds if and only if  the degree sequence of $G$ is $(3, 3, 2, \ldots, 2, 1, 1, 1, 1)$.
\end{thm}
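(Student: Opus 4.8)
The plan is to follow the two-case scheme of the proofs of Theorems \ref{thm5} and \ref{thm6}. Set $s=|\{w\in V:d_G(w)\ge 3\}|$ and $h=|\{w\in V:d_G(w)=1\}|$, and let $\triangle(G)$ denote the maximum degree; by Lemma \ref{lem4}, $\sum_v d_G(v)=2(n-1)$, and the estimate $2(n-1)\ge \triangle(G)+3(s-1)+2(n-s-h)+h$ gives $h\ge \triangle(G)+s-1$. Since $G\ncong P_n$ we have $\triangle(G)\ge 3$ and $s\ge 1$; and since $(3,2,\dots,2,1,1,1)$ is the unique degree sequence of a tree with $s=1$ and $\triangle(G)=3$, the hypothesis forces $s+\triangle(G)\ge 5$. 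The elementary fact I would record first is that a tree on $n$ vertices has exactly four leaves if and only if its degree sequence is $(4,2,\dots,2,1,1,1,1)$ or $(3,3,2,\dots,2,1,1,1,1)$ — because $\sum_v(d_G(v)-2)=-2$ forces $\sum_{d_G(v)\ge 3}(d_G(v)-2)=2$, which can only be $2$ or $1+1$ — and that the two corresponding total irregularities are $6n-18$ and $6n-20$, both at least $6n-20$.

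In Case 1, $s+\triangle(G)=5$, so $(s,\triangle(G))\in\{(1,4),(2,3)\}$; in each subcase the handshake identity forces $h=4$, hence the degree sequence is one of the two above and $\mathrm{irr}_t(G)\in\{6n-18,6n-20\}$, with $\mathrm{irr}_t(G)=6n-20$ exactly when the degree sequence is $(3,3,2,\dots,2,1,1,1,1)$.

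In Case 2, $s+\triangle(G)\ge 6$, so $h\ge \triangle(G)+s-1\ge 5$. Here I would apply the branch-transformation repeatedly. Any tree with at least three leaves has a vertex $u$ of degree $\ge 3$, one branch at $u$ is a hanging tree connecting to $u$ while another branch of $u$ still contains a leaf, so all three conditions for a branch-transformation are met; moreover the result is again a tree and, by Remark \ref{rem3}, it has exactly one fewer leaf. Performing the branch-transformation $h-4\ge 1$ times starting from $G$ therefore reaches a tree $H_3$ with exactly four leaves, so $\mathrm{irr}_t(H_3)\ge 6n-20$ by the fact recorded above; by Lemma \ref{lem2}, $\mathrm{irr}_t(G)>\mathrm{irr}_t(H_3)\ge 6n-20$. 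Since all equality cases lie in Case 1, this finishes the proof.

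The point where this argument departs from the earlier proofs — and the step I expect to require the most care — is that four leaves no longer determine the degree sequence uniquely (whereas two or three leaves do), so in Case 2 one must stop the branch-transformation at four leaves rather than pushing it down to $(3,2,\dots,2,1,1,1)$, which would only yield the weaker bound $4n-10$; one then separately observes that the "extra" sequence $(4,2,\dots,2,1,1,1,1)$ has total irregularity $6n-18>6n-20$, so it does not spoil the equality characterization. A minor subsidiary point needing a line of justification is that the branch-transformation remains available at every intermediate step, i.e., that each intermediate tree with at least five leaves does admit a hanging tree attached to a vertex of degree at least three.
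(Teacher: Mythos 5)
Your proposal is correct and follows essentially the same route as the paper: the handshake identity giving $h\geq\triangle(G)+s-1$, a case split on small versus large $s+\triangle(G)$, and repeated branch-transformations combined with Lemma \ref{lem2}. The only difference is a mild streamlining in your Case 2 — you stop at an arbitrary four-leaf tree and invoke the observation that both four-leaf degree sequences have total irregularity at least $6n-20$, whereas the paper steers the transformations to a specific terminal degree sequence in each subcase — but this is the same argument in substance.
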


\begin{proof}
Clearly,  $2(n-1)=\sum\limits_{v\in V}d_G(v)$ by Lemma \ref{lem4}. Let $s=|\{w|d_{G}(w)\geq 3, w\in V\}|$, and $h=|\{w|d_{G}(w)=1, w\in V\}|$.
Then $s\geq1$  and $\triangle{(G)}\geq 3$ since  $G \ncong P_{n}$. Now   we complete the proof by the following two cases.

\noindent {\bf Case 1: }  $s=1$.

Then $\triangle(G)\geq 4$ because  sequence $(3,2,\ldots,2,1,1,1)$ is not the degree sequence of $G$.

\noindent {\bf Subcase 1.1: }  $\triangle(G)= 4$.

Then $h=4$ by $2(n-1)=\sum\limits_{v\in V}d_G(v)=4+2(n-1-h)+h$, and the degree sequence of $G$ is $(4,2,\ldots,2,1,1,1,1)$.
Thus $\rm irr_{t}$$(G)=6n-18>6n-20$.

\noindent {\bf Subcase 1.2: }   $\triangle(G)\geq 5$.

Then $h=\triangle(G)\geq 5$ by $2(n-1)=\sum\limits_{v\in V}d_G(v)= \triangle(G)+2(n-1-h)+h$.
Now we can do branch-transformation $h-4$ times on $G$ till the  degree sequence of the resulting graph  is $(4,2,\ldots,2,1,1,1,1)$,
denoted by $H_3$, and thus   $\rm irr_{t}$$(G)>$$\rm irr_t$$(H_3)=6n-18>6n-20$ by Lemma \ref{lem2}.

\noindent {\bf Case 2: }  $s\geq 2$.

\noindent {\bf Case 2.1: }  $s+\triangle(G)=5$.

Then  the degree sequence of $G$ is $(3,3,2,\ldots,2,1,1,1,1)$, and thus $\rm irr_{t}$$(G)=6n-20$.

\noindent {\bf Case 2.2: }  $s+\triangle(G)\geq 6$.

 Then $h\geq\triangle(G)+s-1\geq 5$ by $2(n-1)=\sum\limits_{v\in V}d_G(v)\geq \triangle(G)+3(s-1)+2(n-s-h)+h$.
Now we can do branch-transformation $h-4$ times on $G$ till the  degree sequence of the resulting graph is $(3,3,2,\ldots,2,1,1,1,1)$,
denoted by $H_4$, and thus   $\rm irr_{t}$$(G)>$$\rm irr_t$$(H_4)=6n-20$ by Lemma \ref{lem2}.
\end{proof}

\begin{rem}\label{rem8}
Let $n\geq 6$, by Theorems \ref{thm5}-\ref{thm7}, we know the minimal, the second minimal, the third minimal total irregularity of trees on $n$ vertices
are $2n-4$, $4n-10$, $6n-20$, respectively, and the degree sequences of the corresponding extremal graphs are $(2,\ldots, 2,1,1)$,
$(3,2,\ldots,2,1,1,1)$, $(3,3,2,\ldots,2,1,1,1,1)$, respectively.
\end{rem}

\section{The minimal total irregularity of unicyclic graphs}
\hskip.6cm In this section, we determine the minimal, the second minimal, the third minimal total irregularity of unicyclic graphs on $n$ vertices
and characterize the extremal graphs.

An unicyclic graph is a simple connected graph in which the number of edges equals the number of vertices.
Let $G=(V,E)$ be an unicyclic graph. Then for any vertex $u\in V$,  $d_G(u)\geq 3$ implies there must exist a hanging tree of $G$ connecting to the vertex $u$,
thus we can obtain the following results by branch-transformation.

\begin{thm}\label{thm9}
Let $n\geq 3$ and $G=(V,E)$ be an unicyclic graph on $n$ vertices.

{\rm (1) }  $\rm irr_{t}$$(G)\geq 0 $,  and the equality holds if and only if $G\cong C_{n}$.

{\rm (2) }  Let $n\geq 4$, and $G\ncong C_n$. Then $\rm irr_{t}$$(G) \geq 2n-2$,  and
the equality holds if and only if the degree sequence of $G$ is $(3,2,\ldots, 2,1)$. % {\rm (see Figure 1)}.
\end{thm}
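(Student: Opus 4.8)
The plan is to mirror the tree arguments of Theorems~\ref{thm5}--\ref{thm7}, with the single structural change that Lemma~\ref{lem4} now gives $\sum_{v\in V}d_G(v)=2n$, since a unicyclic graph has $m=n$ edges. Throughout I would set $s=|\{w:d_G(w)\geq 3\}|$ and $h=|\{w:d_G(w)=1\}|$, and record the bookkeeping identity
\[
\sum_{w\,:\,d_G(w)\geq 3}\bigl(d_G(w)-2\bigr)=h\qquad\text{whenever }h\geq 1,
\]
which follows from $\sum_{v\in V}\bigl(d_G(v)-2\bigr)=2n-2n=0$. Its immediate consequence --- a unicyclic graph with a pendant vertex necessarily also has a vertex of degree at least $3$ --- is what takes the place of the phenomenon, available for trees, that $s=0$ is possible (and yields $P_n$).

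For part~(1): since $\mathrm{irr}_t(G)$ is a sum of absolute values it is nonnegative, with equality exactly when $G$ is regular. If $h=0$, then every vertex has degree at least $2$ and $\sum_{v\in V}d_G(v)=2n$ forces every degree to equal $2$, so $G\cong C_n$ and $\mathrm{irr}_t(G)=0$; if $h\geq 1$, the identity above furnishes a vertex of degree at least $3$, so $G$ is not regular and $\mathrm{irr}_t(G)>0$. This disposes of (1) quickly.

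For part~(2) I would fix $n\geq 4$ and $G\ncong C_n$, so $h\geq 1$ and $\triangle(G)\geq 3$, and split on $h$. If $h=1$, the identity forces $G$ to have exactly one vertex of degree $3$ and every other non-pendant vertex of degree $2$, i.e.\ degree sequence $(3,2,\ldots,2,1)$; counting the $2n$ ordered degree-pairs directly (the pendant against the degree-$3$ vertex gives $2$; the pendant against each of the $n-2$ degree-$2$ vertices gives $1$; the degree-$3$ vertex against each of those $n-2$ vertices gives $1$) yields $\mathrm{irr}_t(G)=2+(n-2)+(n-2)=2n-2$. If $h\geq 2$, I would apply the branch-transformation repeatedly: as recalled just before the theorem, in a unicyclic graph a vertex of degree at least $3$ always carries a hanging tree, and $G$ has at least two pendant vertices, so all three conditions for a branch-transformation hold; one checks that it preserves unicyclicity, decreases $h$ by exactly $1$ by Remark~\ref{rem3}, and decreases the excess $\sum_{d_G(w)\geq 3}(d_G(w)-2)$ by exactly $1$, so these conditions persist as long as $h\geq 2$. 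After $h-1$ applications we reach a unicyclic graph $H$ with a single pendant vertex, hence with degree sequence $(3,2,\ldots,2,1)$ and $\mathrm{irr}_t(H)=2n-2$ by the previous case, and Lemma~\ref{lem2} gives $\mathrm{irr}_t(G)>\mathrm{irr}_t(H)=2n-2$. Combining the two subcases, $\mathrm{irr}_t(G)=2n-2$ forces $h=1$, hence degree sequence $(3,2,\ldots,2,1)$, and conversely that degree sequence realizes the value $2n-2$.

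The main point requiring care --- rather than a genuine obstacle --- is making the branch-transformation process terminate \emph{exactly} at the sequence $(3,2,\ldots,2,1)$ and not stall earlier; the excess identity is precisely what pins this down, since it remains equal to $h$ at every stage. I also expect the claim (stated in the paragraph preceding the theorem) that a vertex $u$ with $d_G(u)\geq 3$ always has a hanging tree connecting to it to deserve an explicit word: whether or not $u$ lies on the unique cycle, at least $d_G(u)-2\geq 1$ of the edges at $u$ lead into tree components of $G-u$, and any one such component together with that edge is a hanging tree connecting to $u$.
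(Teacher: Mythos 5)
Your proof is correct and follows essentially the same route as the paper's: identify the candidate degree sequence $(3,2,\ldots,2,1)$ via the degree-sum identity, compute its total irregularity directly, and use Lemma~\ref{lem2} (branch-transformation) repeatedly to strictly reduce any other unicyclic non-cycle to that sequence. Your case split on $h$ is equivalent, via your excess identity, to the paper's split on $s+\triangle(G)$, and your added justifications (existence of a hanging tree at a vertex of degree at least $3$, preservation of unicyclicity, non-stalling of the reduction) only make explicit what the paper leaves implicit.
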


\begin{proof}
  {\rm (1) } is obvious. Now we show {\rm (2) } holds.

It is obvious that $2n=\sum\limits_{v\in V}d_G(v)$ by Lemma \ref{lem4}.
Let $s=|\{w|d_{G}(w)\geq 3, w\in V\}|$, and $h=|\{w|d_{G}(w)=1, w\in V\}|$.
Then $s\geq1$, $h\geq 1$, $\triangle(G)\geq 3$ by $G \ncong C_{n}$  and $2n=\sum\limits_{v\in V}d_G(v)$.
Now we complete the proof by the following two cases.

\noindent {\bf Case 1: }  $s+\triangle(G)=4$.

Then $(3,2,\ldots, 2,1)$ is the degree sequence of $G$ by $2n=\sum\limits_{v\in V}d_G(v)=3+2(n-1-h)+h$, and
thus $\rm irr_{t}$$(G)=2n-2$.

\noindent {\bf Case 2: }   $s+\triangle(G)\geq 5$.

Then $h\geq \triangle(G)+s-3\geq2$ by  $2n=\sum\limits_{v\in V}d_G(v)\geq \triangle(G)+3(s-1)+2(n-s-h)+h$,
and we can do branch-transformation $h-1$ times on $G$ till the  degree sequence of the resulting graph  is $(3,2,\ldots,2,1)$, denoted by $H_5$,
and thus   $\rm irr_{t}$$(G)>$$\rm irr_t$$(H_5)=2n-2$ by Lemma \ref{lem2}.
\end{proof}

 \begin{thm}\label{thm10} %µÚÈýС
Let $n\geq 5$,  $G=(V,E)$ be an unicyclic graph on $n$ vertices with $G\ncong C_n.$ If the sequence  $(3,2,\ldots, 2,1)$ is not  the degree sequence of $G,$ then $\rm irr_{t}$$(G)\geq 4n-8$,
 and the equality holds if and only if the degree sequence of $G$ is $(3,3, 2,\ldots, 2, 1, 1)$.
\end{thm}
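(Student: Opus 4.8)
The plan is to follow the same template as the proofs of Theorems~\ref{thm5}--\ref{thm9}. By Lemma~\ref{lem4}, $\sum_{v\in V}d_G(v)=2n$. I would set $s=|\{w\in V:d_G(w)\geq 3\}|$, $h=|\{w\in V:d_G(w)=1\}|$, and write $\triangle(G)$ for the maximum degree; since $G\ncong C_n$ we have $s\geq 1$, $h\geq 1$ and $\triangle(G)\geq 3$. The two relevant ``small'' degree sequences are $(4,2,\ldots,2,1,1)$, whose total irregularity is $4n-6$ by a direct sum over degree pairs, and $(3,3,2,\ldots,2,1,1)$, whose total irregularity is $4n-8$ by the same kind of sum; both are realised by unicyclic graphs for every $n\geq 5$. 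The strategy is to use Lemma~\ref{lem2} to push every admissible $G$ strictly below its own $\mathrm{irr}_t$ down to one of these two, so that $4n-8$ is attained only by the second.

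I would first dispose of the case $s=1$. Here $\triangle(G)\geq 4$ is forced, because $s=1$ and $\triangle(G)=3$ together with $2n=3+2(n-1-h)+h$ would give $h=1$ and hence the excluded degree sequence $(3,2,\ldots,2,1)$. If $\triangle(G)=4$ then $h=2$, the degree sequence is $(4,2,\ldots,2,1,1)$, and $\mathrm{irr}_t(G)=4n-6>4n-8$. If $\triangle(G)\geq 5$ then $h=\triangle(G)-2\geq 3$, and performing the branch-transformation $h-2$ times at the unique vertex of degree $\geq 3$ reaches a graph $H$ with degree sequence $(4,2,\ldots,2,1,1)$, so by Lemma~\ref{lem2}, $\mathrm{irr}_t(G)>\mathrm{irr}_t(H)=4n-6>4n-8$.

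Next I would treat $s\geq 2$, splitting on $s+\triangle(G)$. If $s+\triangle(G)=5$ then $s=2$ and $\triangle(G)=3$, so both vertices of degree $\geq 3$ have degree $3$, the degree sequence is $(3,3,2,\ldots,2,1,1)$, and $\mathrm{irr}_t(G)=4n-8$. If $s+\triangle(G)\geq 6$, then the estimate $2n\geq\triangle(G)+3(s-1)+2(n-s-h)+h$ gives $h\geq\triangle(G)+s-3\geq 3$; I would then apply the branch-transformation $h-2$ times, each time lowering by one the degree of a chosen vertex of degree $\geq 3$, reducing all but two of these vertices to degree $2$ and the remaining two to degree $3$, so the terminal graph $H$ has degree sequence $(3,3,2,\ldots,2,1,1)$ and $\mathrm{irr}_t(G)>\mathrm{irr}_t(H)=4n-8$. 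Assembling the cases gives $\mathrm{irr}_t(G)\geq 4n-8$; since equality occurs only in the subcase $s+\triangle(G)=5$, it holds exactly when the degree sequence of $G$ is $(3,3,2,\ldots,2,1,1)$, and the converse is the direct computation mentioned above.

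The degree-pair sums are routine; the step that needs care is the branch-transformation chain. I would have to check that at each of the $h-2$ steps the three applicability conditions still hold: a vertex of degree $\geq 3$ exists, it carries a hanging tree (automatic in a unicyclic graph once the degree is $\geq 3$), and at least two pendent vertices remain (the pendent count starts at $h\geq 3$ and decreases by exactly one per step, so it only reaches $2$ at the last step). I would also have to argue that one can always choose which hanging tree to relocate so that the process terminates at $(3,3,2,\ldots,2,1,1)$ rather than at $(4,2,\ldots,2,1,1)$; this is only needed for the clean description of the extremal graphs, since terminating at the latter would still give $\mathrm{irr}_t(G)>4n-6>4n-8$, so the inequality itself is never endangered.
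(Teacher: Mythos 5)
Your proposal is correct and follows essentially the same route as the paper's proof: the identical case split on $s=1$ versus $s\geq 2$ (with the subcases $\triangle(G)=4$, $\triangle(G)\geq 5$, $s+\triangle(G)=5$, $s+\triangle(G)\geq 6$), the same degree-sum bounds forcing $h\geq 3$, and the same chain of $h-2$ branch-transformations terminating at $(4,2,\ldots,2,1,1)$ or $(3,3,2,\ldots,2,1,1)$. Your added remarks on verifying the applicability conditions at each step of the transformation chain are a point of care the paper leaves implicit, but they do not change the argument.
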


\begin{proof}
Clearly, $2n=\sum\limits_{v\in V}d_G(v)$ by Lemma \ref{lem4}.
Let $s=|\{w|d_{G}(w)\geq 3, w\in V\}|$, and $h=|\{w|d_{G}(w)=1, w\in V\}|$.
Then $s\geq1$, $h\geq 1$ by $G \ncong C_{n}$  and $2n=\sum\limits_{v\in V}d_G(v)$.
Now we complete the proof by the following two cases.

 \noindent {\bf Case 1: }  $s=1$.

Then $\triangle(G)\geq 4$ because  sequence $(3,2,\ldots,2,1)$ is not the degree sequence of $G$.

\noindent {\bf Subcase 1.1: }  $\triangle(G)=4$.

Then $h=2$ and the degree sequence is $(4,2,\ldots,2,1,1)$ by $2n=\sum\limits_{v\in V}d_G(v)=4+2(n-1-h)+h$,
 and thus $\rm irr_{t}$$(G)=4n-6>4n-8$.

\noindent {\bf Subcase 1.2: }    $\triangle(G)\geq5$.

Then $h=\triangle(G)-2\geq 3$ by $2n=\sum\limits_{v\in V}d_G(v)=\triangle(G)+2(n-1-h)+h$,
and we can do branch-transformation $h-2$ times on $G$ till the  degree sequence of the resulting graph is $(4,2,\ldots,2,1,1)$,
denoted by $H_6$, and thus   $\rm irr_{t}$$(G)>$$\rm irr_t$$(H_6)=4n-6$ by Lemma \ref{lem2}.

\noindent {\bf Case 2: }  $s\geq2$.

\noindent {\bf Subcase 2.1: }   $s+\triangle(G)=5$.

Then  $h=2$ and the degree sequence is $(3,3,2,\ldots,2,1,1)$ by $2n=\sum\limits_{v\in V}d(v)=6+2(n-2-h)+h$, and
thus  $\rm irr_{t}$$(G)=4n-8$.

\noindent {\bf Subcase 2.2: }   $s+\triangle(G)\geq 6$.

Then $h\geq \triangle(G)+s-3\geq 3$ by $2n=\sum\limits_{v\in V}d_G(v)\geq \triangle(G)+3(s-1)+2(n-s-h)+h$,
and we can do branch-transformation $h-2$ times on $G$ till the  degree sequence of the resulting graph  is $(3,3,2,\ldots,2,1,1)$, denoted by $H_7$,
and thus   $\rm irr_{t}$$(G)>$$\rm irr_t$$(H_7)=4n-8$ by Lemma \ref{lem2}.
\end{proof}

\begin{rem}\label{rem11}
Let $n\geq 5$, by Theorems \ref{thm9}-\ref{thm10}, we know the minimal, the second minimal, the third minimal total irregularity of unicyclic graphs on $n$ vertices are $0$, $2n-2$, $4n-8$, respectively, and the degree sequences of the corresponding extremal graphs are $(2,\ldots, 2)$,
$(3,2,\ldots,2,1)$, $(3,3,2,\ldots,2,1,1)$, respectively.
\end{rem}

\section{The minimal total irregularity of bicyclic graphs}
\hskip.6cm In this section, we determine the minimal, the second minimal, the third minimal total irregularity of bicyclic graphs on $n$ vertices
and characterize the extremal graphs.

A bicyclic graph is a simple connected graph in which the number of edges equals the number of vertices plus one.
There are two basic bicyclic graphs: $\infty$-graph and $\Theta$-graph. An $\infty$-graph, denoted by
$\infty(p, q, l)$ (see Figure 2), is obtained from two vertex-disjoint cycles $C_p$ and $C_q$ by connecting one vertex
of $C_p$ and one of $C_q$ with a path $P_l$ of length $l-1$
(in the case of $l=1$, identifying the above two vertices, see Figure 3) where $p,q\geq 3$ and $l\geq 1$;
and a $\Theta$-graph, denoted by $\theta(p, q, l)$ (see Figure 4),
is a graph  on $p+q-l$ vertices with the two cycles $C_p$ and $C_q$ have $l$ common vertices, where $p,q\geq 3$ and $l\geq 2$.

\vspace{0.3cm}
\hskip-.4cm
\begin{picture}(400, 50)

\put(180,25){\circle{40}}
\put(200,25){\circle*{2}}
\put(200,25){\line(1,0){20}}
\put(220,25){\circle*{2}}
\put(223,22){$\cdots$}
\put(240,25){\circle*{2}}
\put(240,25){\line(1,0){20}}
\put(260,25){\circle*{2}}
\put(280,25){\circle{40}}

\put(175,23) {\small $C_p$}
\put(275,23) {\small $C_q$}
\put(130,-20) {\small Figure 2. The graph $\infty(p,q,l)$ with $l\geq 2$}
\end{picture}\\

\vspace{0.2cm}
\hskip-.4cm
\begin{picture}(400, 50)
\put(210,20){\circle{40}}
\put(250,20){\circle{40}}

\put(230,20){\circle*{2}}
%\put(210,0){\circle*{2}}
%\put(258,1){\circle*{2}}

\put(207,15) {\small $C_p$}
\put(250,15) {\small $C_q$}
\put(160,-27) {\small Figure 3. The graph $\infty(p,q,1)$}
\end{picture}\\
\vspace{0.3cm}

\hskip-.4cm
\vspace{0.3cm}
\begin{picture}(500,110)

\put(200,50){\circle*{2}}
\put(220,50){\circle*{2}}
\put(240,50){\circle*{2}}
\put(260,50){\circle*{2}}
\put(200,90){\circle*{2}}
\put(220,90){\circle*{2}}
\put(240,90){\circle*{2}}
\put(260,90){\circle*{2}}
\put(200,10){\circle*{2}}
\put(220,10){\circle*{2}}
\put(240,10){\circle*{2}}
\put(260,10){\circle*{2}}

\put(225,10){\circle*{1}}
\put(230,10){\circle*{1}}
\put(235,10){\circle*{1}}

\put(225,50){\circle*{1}}
\put(230,50){\circle*{1}}
\put(235,50){\circle*{1}}

\put(225,90){\circle*{1}}
\put(230,90){\circle*{1}}
\put(235,90){\circle*{1}}

\put(200,50){\line(1,0){20}}
\put(200,90){\line(1,0){20}}
\put(200,10){\line(1,0){20}}

\put(240,50){\line(1,0){20}}
\put(240,90){\line(1,0){20}}
\put(240,10){\line(1,0){20}}

\put(200,50){\line(0,-1){40}}
\put(200,90){\line(0,-1){40}}

\put(260,50){\line(0,-1){40}}
\put(260,90){\line(0,-1){40}}

\put(190,53){\footnotesize$z_1$}
\put(215,53){\footnotesize$z_2$}
%\put(142,53){\footnotesize$z_{l-1}$}
\put(265,53){\footnotesize$z_{l}$}

\put(190,94){\footnotesize$x_1$}
\put(215,94){\footnotesize$x_2$}
%\put(142,94){\footnotesize$x_{p-l-1}$}
\put(265,94){\footnotesize$x_{p-l}$}

\put(190,2){\footnotesize$y_1$}
\put(215,2){\footnotesize$y_2$}
%\put(142,2){\footnotesize$y_{q-l-1}$}
\put(265,2){\footnotesize$y_{q-l}$}

\put(225,70){$C_p$}
\put(225,25){$C_q$}

\put(165,-25) {\small Figure 4. The graph $\theta(p,q,l)$}
\end{picture}
\vspace{0.2cm}

Denoted by $\mathcal{B}_n$ is the set of all bicyclic graphs on $n$ vertices.
Obviously, $\mathcal{B}_n$ consists of three types of graphs:
first type denoted by $B^{+}_n$, is the set of those graphs each of which is an $\infty$-graph, $\infty(p,q,l)$,  with trees attached when $l=1$;
second type denoted by $B^{++}_n$, is the set of those graphs each of which is an $\infty$-graph, $\infty(p,q,l)$,  with trees attached when $l\geq 2$;
third type denoted by $\Theta_n$, is the set of those graphs each of which is a $\Theta$-graph, $\theta(p,q,l)$,  with trees attached.
Then $\mathcal{B}_n=B_n^+\cup B_n^{++}\cup \Theta_n.$

\subsection{The graph with minimal total irregularity in $B^{+}_n$}
\hskip.6cm In this subsection,  the minimal, the second minimal total irregularity of the bicyclic graphs in $B^{+}_n$ are determined.

\begin{thm}\label{thm12}
Let $n\geq 6$,  $G=(V,E)\in B^{+}_n$.

{\rm (1) } $\rm irr_t$$(G)\geq 2n-2$, and the equality holds if and only if  the degree sequence of $G$ is $(4,2,\ldots,2)$.

{\rm (2) } If $(4,2,\ldots,2)$  is not the degree sequence of $G$, then $\rm irr_{t}$$(G)\geq 4n-6$, and the equality holds if and only if
 the degree sequence of $G$ is $(4,3,2,\ldots,2,1)$.
 \end{thm}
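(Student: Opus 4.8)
The plan is to follow the template of Theorems~\ref{thm5}--\ref{thm10}, using the feature special to $B^+_n$: every $G\in B^+_n$ contains a distinguished vertex $w_0$, the unique vertex common to the two cycles of the underlying $\infty(p,q,1)$, with $d_G(w_0)\ge 4$, and a branch-transformation never disturbs the two cycles, so it keeps $G$ inside $B^+_n$ and keeps $d(w_0)\ge 4$. First I would record the arithmetic: $G$ bicyclic gives $\sum_{v\in V}d_G(v)=2n+2$ by Lemma~\ref{lem4}; writing $s=|\{w:d_G(w)\ge 3\}|$, $h=|\{w:d_G(w)=1\}|$, the vertex $w_0$ forces $s\ge 1$ and $\triangle(G)\ge 4$, and separating the degree sum as $\sum_v d_G(v)\ge \triangle(G)+3(s-1)+2(n-s-h)+h$ gives $h\ge \triangle(G)+s-5$, while in fact the exact identity $\sum_{d_G(w)\ge 3}\bigl(d_G(w)-2\bigr)=h+2$ holds. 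The two quantities I would track through any branch-transformation are the pendent count $h$ and the total excess $\sum_{d(w)\ge 3}(d(w)-2)$, each of which decreases by exactly $1$ per transformation.

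For part (1): since $w_0$ alone contributes $d_G(w_0)-2\ge 2$ to the excess, $h=0$ forces $d_G(w_0)=4$ and no further vertex of degree $\ge 3$, i.e.\ the degree sequence is $(4,2,\dots,2)$, for which a direct count of $|d(u)-d(v)|$ over all pairs gives $\mathrm{irr}_t(G)=2(n-1)=2n-2$. Otherwise $h\ge 1$ — equivalently the degree sequence is not $(4,2,\dots,2)$, equivalently $s+\triangle(G)\ge 6$ — and then the inequality to prove is $\mathrm{irr}_t(G)\ge 4n-6$, which for $n\ge 6$ exceeds $2n-2$; so part (1) reduces to part (2) together with this description of the equality case.

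For part (2) I would assume $h\ge 1$ and split on the value of $h$. If $h=1$ the excess is $3$, so, since the $w_0$-term of the excess is $d_G(w_0)-2\in\{2,3\}$, the degree sequence is either $(4,3,2,\dots,2,1)$, for which $\mathrm{irr}_t(G)=4n-6$, or $(5,2,\dots,2,1)$, for which $\mathrm{irr}_t(G)=4n-4>4n-6$. If $h\ge 2$, I would apply $h-1$ successive branch-transformations: before each step the pendent count is still $\ge 2$, and the excess is still $\ge 4$, so the graph either has $d(w_0)\ge 5$ (then $w_0$ carries a hanging tree) or has a second vertex of degree $\ge 3$ (which, lying on a cycle or in an attached tree, also carries a hanging tree), furnishing an admissible source vertex; by Lemma~\ref{lem2} each step strictly decreases $\mathrm{irr}_t$. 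The resulting graph $H\in B^+_n$ has $h=1$ and excess $3$, hence by the $h=1$ analysis $\mathrm{irr}_t(H)\ge 4n-6$, so $\mathrm{irr}_t(G)>\mathrm{irr}_t(H)\ge 4n-6$. Altogether $\mathrm{irr}_t(G)\ge 4n-6$, with equality exactly when no transformation is performed and the sequence is $(4,3,2,\dots,2,1)$.

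I expect the only real obstacle to be the legality bookkeeping in the $h\ge 2$ case: the subtle point is that $w_0$ with degree exactly $4$ is \emph{not} an admissible source (there is no hanging tree there, both of its remaining edge-slots being used by the cycles), so one must verify that whenever $h\ge 2$ some \emph{other} admissible source exists; conversely one does \emph{not} need to steer toward a particular extremal graph, since $h$ and the excess alone pin the terminal degree sequence down to one of $(4,3,2,\dots,2,1)$, $(5,2,\dots,2,1)$. The remaining tasks are routine: the two $\mathrm{irr}_t$ evaluations, and checking that the extremal graphs occur in $B^+_n$ for $n\ge 6$ — e.g.\ $\infty(3,n-2,1)$ realizes $(4,2,\dots,2)$, and attaching a pendant path to a degree-$2$ vertex of $\infty(3,n-3,1)$ realizes $(4,3,2,\dots,2,1)$.
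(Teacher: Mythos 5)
Your proposal is correct and follows essentially the same route as the paper: reduce via Lemma \ref{lem2} (branch-transformation) and the degree-sum identity $\sum_v d_G(v)=2n+2$ to the candidate degree sequences $(4,2,\ldots,2)$, $(5,2,\ldots,2,1)$, $(4,3,2,\ldots,2,1)$ and evaluate $\mathrm{irr}_t$ there. The only difference is organizational — you index cases by $h$ and the degree excess $\sum_{d\ge 3}(d-2)=h+2$ where the paper uses $s$ and $\triangle(G)$, and you spell out the admissibility of the source vertex slightly more carefully — but the substance is identical.
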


\begin{proof}
Clearly, $\sum\limits_{v\in V}d_G(v)=2(n+1)$ by Lemma \ref{lem4}.
Let $s=|\{w|d_{G}(w)\geq 3, w\in V\}|$, $h=|\{w|d_{G}(w)=1, w\in V\}|$ and $t=|\{w|d_{G}(w)=\triangle(G), w\in V\}|$.
Then $s\geq1$, $h\geq 0$, $1\leq t\leq s$ and $\triangle(G)\geq4$ by   $G\in B^{+}_n$.

Note that  $G\in B^{+}_n$, if $s=1$, $\triangle(G)\geq 5$ or $s\geq 2$, there must exist a vertex $u$ with $d_G(u)\geq 3$ and
there exists a hanging tree of $G$ connecting to $u$.  Then we complete the proof by the following two cases.

\noindent {\bf Case 1: }  $s=1$.

\noindent {\bf Subcase 1.1: }  $\triangle(G)=4$.

Then $h=0$ and the degree sequence of $G$ is $(4,2,\ldots, 2)$  by the fact  $2(n+1)=\sum\limits_{v\in V}d_G(v)=4+2(n-1-h)+h$,
and thus $\rm irr_{t}$$(G)=2n-2$.

\noindent {\bf Subcase 1.2: }    $\triangle(G)=5$.

Then $h=1$ and the degree sequence of $G$ is $(5,2,\ldots, 2, 1)$  by the fact  $2(n+1)=\sum\limits_{v\in V}d_G(v)=5+2(n-1-h)+h$,
and thus $\rm irr_{t}$$(G)=4n-4>4n-6$.

\noindent {\bf Subcase 1.3: }   $\triangle(G)\geq6$.

Then $h=\triangle(G)-4\geq2$ by the fact  $2(n+1)=\sum\limits_{v\in V}d_G(v)=\triangle(G)+2(n-1-h)+h$,
and we can do branch-transformation $h-1$ times on $G$ till the  degree sequence of the resulting graph  is $(5,2,\ldots,2,1)$,
 denoted by $H_8$, and thus   $\rm irr_{t}$$(G)>$$\rm irr_t$$(H_8)=4n-4$ by Lemma \ref{lem2}.

\noindent {\bf Case 2: }  $s\geq2$.

\noindent {\bf Subcase 2.1: }   $s+\triangle(G)=6$.

Then $s=2$, $\triangle(G)=4$ and $1\leq t\leq 2$.

If $t=1$, then $h=1$ and the degree sequence of $G$ is $(4,3, 2,\ldots, 2, 1)$  by the fact  $2(n+1)=\sum\limits_{v\in V}d_G(v)=4+3+2(n-2-h)+h$,
and thus $\rm irr_{t}$$(G)=4n-6$.

If $t=2$, then $h=2$  by the fact  $2(n+1)=\sum\limits_{v\in V}d_G(v)=4+4+2(n-2-h)+h$,
and we can do branch-transformation once on $G$ such that the  degree sequence of the resulting graph is $(4,3,2\ldots,2,1)$,
 denoted by $H_9$, and thus   $\rm irr_{t}$$(G)>$$\rm irr_t$$(H_9)=4n-6$ by Lemma \ref{lem2}.

\noindent {\bf Subcase 2.2: }   $s+\triangle(G)\geq 7$.

Then $h\geq \triangle(G)+s-5\geq 2$   by the fact  $2(n+1)=\sum\limits_{v\in V}d_G(v)\geq \triangle(G)+3(s-1)+2(n-s-h)+h$,
and we can do branch-transformation $h-1$ times on $G$ such that the  degree sequence of the resulting graph  is $(4,3,2\ldots,2,1)$,
 denoted by $H_9$, and thus   $\rm irr_{t}$$(G)>$$\rm irr_t$$(H_9)=4n-6$ by Lemma \ref{lem2}.
\end{proof}

\subsection{The graph with minimal total irregularity in $B^{++}_n$}
\hskip.6cm In this subsection,  the minimal, the second minimal total irregularity of the bicyclic graphs in $B^{++}_n$ are determined.

\begin{thm}\label{thm13}
Let $n\geq 7$,  $G=(V,E)\in B^{++}_n$.

{\rm (1) } $\rm irr_t$$(G)\geq 2n-4$, and the equality holds if and only if  the degree sequence of $G$ is $(3,3,2,\ldots,2)$.

{\rm (2) } If $(3,3,2,\ldots,2)$  is not the degree sequence of $G$, then $\rm irr_{t}$$(G)\geq 4n-10$, and the equality holds if and only if
 the degree sequence of $G$ is $(3,3,3,2,\ldots,2,1)$.
\end{thm}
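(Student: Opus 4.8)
The plan is to reduce everything, via the branch-transformation (Lemma~\ref{lem2}), to a short list of ``base'' degree sequences and then evaluate $\mathrm{irr}_{t}$ directly on those. Throughout I write $\sum_{v\in V}d_G(v)=2(n+1)$ (Lemma~\ref{lem4}) and $h=|\{w\in V:d_G(w)=1\}|$. The structural fact I would isolate first is that, since $G\in B^{++}_n$, the underlying graph $\infty(p,q,l)$ with $l\geq 2$ already contains two vertices of degree $\geq3$ — the two cut vertices where the connecting path meets $C_p$ and $C_q$ — so $G$ has at least two vertices of degree $\geq3$; moreover, if $h\geq1$ then, walking from a pendant vertex along its path until the first vertex $u$ with $d_G(u)\geq3$, the traversed vertices (excluding $u$) form a hanging tree of $G$ attached at $u$. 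Hence whenever $h\geq2$ all three conditions for a branch-transformation hold, and since the two cycles and the connecting path lie outside every hanging tree, the resulting graph again belongs to $B^{++}_n$; by Remark~\ref{rem3} each branch-transformation lowers $h$ by exactly one.

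The case $h=0$ is immediate: with no pendant vertex $G$ has no nontrivial attached tree, so $G=\infty(p,q,l)$ with $l\geq2$, its degree sequence is $(3,3,2,\ldots,2)$, and a direct count gives $\mathrm{irr}_{t}(G)=2n-4$. This already disposes of the equality case of (1). For $h=1$ I would argue purely arithmetically: since $\sum_v(d_G(v)-2)=2$ and the unique pendant contributes $-1$, we get $\sum_{d_G(v)\geq3}(d_G(v)-2)=3$; as there are at least two vertices of degree $\geq3$, the degree sequence must be $(4,3,2,\ldots,2,1)$ or $(3,3,3,2,\ldots,2,1)$, with $\mathrm{irr}_{t}$ equal to $4n-6$ and $4n-10$ respectively (both realizable in $B^{++}_n$ when $n\geq7$). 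Finally, if $h\geq2$, applying branch-transformations $h-1$ times yields a graph $G^{*}\in B^{++}_n$ with exactly one pendant vertex, and Lemma~\ref{lem2} gives $\mathrm{irr}_{t}(G)>\mathrm{irr}_{t}(G^{*})\geq4n-10$.

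Assembling the three cases proves (1): $\mathrm{irr}_{t}(G)\geq2n-4$, with equality only in the case $h=0$, i.e.\ exactly for degree sequence $(3,3,2,\ldots,2)$, since both $h=1$ and $h\geq2$ force $\mathrm{irr}_{t}(G)\geq4n-10>2n-4$ for $n\geq7$. For (2), the extra hypothesis rules out $h=0$, so $h\geq1$; the value $4n-10$ occurs only in the $h=1$ subcase with degree sequence $(3,3,3,2,\ldots,2,1)$, while the $h=1$ sequence $(4,3,2,\ldots,2,1)$ gives $4n-6$ and every graph with $h\geq2$ gives strictly more than $\mathrm{irr}_{t}(G^{*})\geq4n-10$; hence $\mathrm{irr}_{t}(G)\geq4n-10$ with equality exactly for degree sequence $(3,3,3,2,\ldots,2,1)$.

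I expect the main obstacle to be the structural claims of the first paragraph — that in a graph of $B^{++}_n$ a hanging tree attached at a vertex of degree $\geq3$ exists as soon as $h\geq1$, and that $B^{++}_n$ is closed under branch-transformation — because these require verifying that a tree hung on the base $\infty$-graph always meets it at a vertex of degree $\geq3$ and that the cyclomatic type $\infty(p,q,l)$ with $l\geq2$ is preserved when a hanging tree is detached and reattached. Everything else (the degree-sum bookkeeping and the evaluations of $\mathrm{irr}_{t}$ on the sequences $(3,3,2,\ldots,2)$, $(4,3,2,\ldots,2,1)$ and $(3,3,3,2,\ldots,2,1)$) is routine.
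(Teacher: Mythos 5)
Your proof is correct and rests on the same engine as the paper's -- Lemma~\ref{lem2} plus the degree-sum identity $\sum_v d_G(v)=2(n+1)$ -- but the case decomposition is genuinely different and, in one respect, tighter. The paper splits on $s=|\{w:d_G(w)\geq 3\}|$ and $\triangle(G)$, and in each branch repeatedly applies the transformation ``till the degree sequence of the resulting graph is'' a specific target such as $(4,3,2,\ldots,2,1)$ or $(3,3,3,2,\ldots,2,1)$; this implicitly requires steering the process to land on that exact sequence, and it asserts without proof that a hanging tree at a vertex of degree $\geq 3$ exists in the relevant subcases. You instead split on $h$, observe that every $B^{++}_n$ graph has at least two vertices of degree $\geq 3$, classify the $h=0$ and $h=1$ degree sequences purely arithmetically from $\sum_{d_G(v)\geq 3}(d_G(v)-2)=2+h$, and for $h\geq 2$ reduce only to \emph{some} graph with one pendant vertex rather than to a named sequence -- which is all that is needed, since the $h=1$ classification then gives $\mathrm{irr}_t(G^{*})\in\{4n-6,\,4n-10\}\geq 4n-10$. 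This buys you a cleaner termination argument and forces you to make explicit the two structural facts the paper leaves tacit (existence of a hanging tree at the first degree-$\geq 3$ vertex reached from a pendant, and closure of $B^{++}_n$ under the transformation because a hanging tree is vertex-disjoint from the base $\infty$-graph); you correctly flag these as the points requiring verification, and both do hold. The paper's version, by contrast, gives slightly more information along the way (it exhibits the second-smallest value $4n-6$ and its sequence $(4,3,2,\ldots,2,1)$ explicitly in its subcases), but your argument proves the stated theorem completely.
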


\begin{proof}
Clearly, $\sum\limits_{v\in V}d_G(v)=2(n+1)$ by Lemma \ref{lem4}.
Let $s=|\{w|d_{G}(w)\geq 3, w\in V\}|$, $h=|\{w|d_{G}(w)=1, w\in V\}|$ and $t=|\{w|d_{G}(w)=\triangle(G), w\in V\}|$.
Then $s\geq2$, $h\geq 0$, $1\leq t\leq s$ and $\triangle(G)\geq3$ by $G\in B^{++}_n$.

 Note that  $G\in B^{++}_n$,  if $s=2$, $\triangle(G)\geq 4$ or $s\geq 3$, there must exist a vertex $u$ with $d_G(u)\geq 3$ and
there exists a hanging tree of $G$ connecting to $u$.  Then we complete the proof by the following two cases.

\noindent {\bf Case 1: }  $s=2$.

\noindent {\bf Subcase 1.1: } $\triangle(G)=3$.

Then $h=0$ and the degree sequence of $G$ is $(3,3,2,\ldots, 2)$ by the fact $2(n+1)=\sum\limits_{v\in V}d_G(v)=3+3+2(n-2-h)+h$,
and thus $\rm irr_{t}$$(G)=2n-4$.

\noindent {\bf Subcase 1.2: }  $\triangle(G)=4$.

Then $t=1$ or $t=2$ by $1\leq t\leq s$.

If $t=1$, then $h=1$ and the degree sequence of $G$ is $(4,3,2,\ldots, 2,1)$ by the fact $2(n+1)=\sum\limits_{v\in V}d_G(v)=4+3+2(n-2-h)+h$,
and thus $\rm irr_{t}$$(G)=4n-6>4n-10$.

If $t=2$, then  $h=2$  by the fact $2(n+1)=\sum\limits_{v\in V}d_G(v)=4+4+2(n-2-h)+h$,
and we can do branch-transformation once on $G$ such that the  degree sequence of the resulting graph  is $(4,3,2\ldots,2,1)$,
denoted by $H_{10}$, and thus   $\rm irr_{t}$$(G)>$$\rm irr_t$$(H_{10})=4n-6$ by Lemma \ref{lem2}.

\noindent {\bf Subcase 1.3: }   $\triangle(G)\geq 5$.

Then $h\geq \triangle(G)-3\geq 2$  by the fact $2(n+1)=\sum\limits_{v\in V}d_G(v)\geq\triangle(G)+3+2(n-2-h)+h$,
and we can do branch-transformation $h-1$ times on $G$ such that the  degree sequence of the resulting graph  is $(4,3,2\ldots,2,1)$,
denoted by $H_{10}$, and thus   $\rm irr_{t}$$(G)>$$\rm irr_t$$(H_{10})=4n-6$ by Lemma \ref{lem2}.

\noindent {\bf Case 2: }  $s\geq 3$.

\noindent {\bf Subcase 2.1: }  $s+\triangle(G)=6$.

Then $h=1$ and the degree sequence of $G$ is $(3,3,3,2,\ldots, 2,1)$ by the fact $2(n+1)=\sum\limits_{v\in V}d_G(v)=3+3+3+2(n-3-h)+h$,
and thus $\rm irr_{t}$$(G)=4n-10$.

\noindent {\bf Subcase 2.2: }  $s+\triangle(G)\geq 7$.

Then $h\geq \triangle(G)+s-5\geq 2$  by the fact $2(n+1)=\sum\limits_{v\in V}d_G(v)\geq\triangle(G)+3(s-1)+2(n-s-h)+h$,
and we can do branch-transformation $h-1$ times on $G$ such that the  degree sequence of the resulting graph  is $(3,3,3,2\ldots,2,1)$,
denoted by $H_{11}$, and thus   $\rm irr_{t}$$(G)>$$\rm irr_t$$(H_{11})=4n-10$ by Lemma \ref{lem2}.
\end{proof}

\subsection{The graph with minimal total irregularity in $\Theta_n$}
\hskip.6cm By the same proof of Theorem \ref{thm13}, we can  determine the minimal,
the second minimal total irregularity of the bicyclic graphs in $\Theta_n$ immediately.

\begin{thm}\label{thm14}
Let $n\geq 5$,  $G=(V,E)\in \Theta_n$.

{\rm (1) } $\rm irr_t$$(G)\geq 2n-4$, and the equality holds if and only if  the degree sequence of $G$ is $(3,3,2,\ldots,2)$.

{\rm (2) } If $(3,3,2,\ldots,2)$  is not the degree sequence of $G$, then $\rm irr_{t}$$(G)\geq 4n-10$, and the equality holds if and only if
 the degree sequence of $G$ is $(3,3,3,2,\ldots,2,1)$.
\end{thm}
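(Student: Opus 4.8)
The plan is to run the proof of Theorem \ref{thm13} essentially verbatim, since a graph $G\in\Theta_n$ shares with a graph in $B_n^{++}$ every feature that argument uses. First I would record the facts that make the transfer work: (a) $G$ is bicyclic, so $|E|=n+1$ and Lemma \ref{lem4} gives $\sum_{v\in V}d_G(v)=2(n+1)$; (b) the core $\theta(p,q,l)$ has exactly two vertices of degree $3$, namely the two junction vertices $z_1,z_l$, and all its other vertices have degree $2$, so its degree sequence is $(3,3,2,\ldots,2)$, exactly like the core $\infty(p,q,l)$ with $l\geq 2$; and (c) consequently, setting $s=|\{w:d_G(w)\geq 3\}|$, $h=|\{w:d_G(w)=1\}|$, $t=|\{w:d_G(w)=\triangle(G)\}|$, one always has $s\geq 2$, $\triangle(G)\geq 3$ and $1\leq t\leq s$, and whenever $s=2$ with $\triangle(G)\geq 4$, or $s\geq 3$, some vertex of degree $\geq 3$ must carry an attached tree, i.e. a hanging tree, so branch-transformation becomes available.

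Next I would dispose of the terminal case $s=2$, $\triangle(G)=3$: here no tree can be attached (attaching one to a junction vertex forces degree $\geq 4>\triangle(G)$, attaching one to a degree-$2$ vertex forces $s\geq 3$), so $G$ is a pure $\Theta$-graph with degree sequence $(3,3,2,\ldots,2)$, and grouping the pairs $\{u,v\}$ by degree pair gives $\mathrm{irr}_t(G)=2(n-2)=2n-4$. Then I would repeat the case split of Theorem \ref{thm13} on $(s,\triangle(G))$: when $s=2$ and $\triangle(G)=4$, the degree-sum identity forces the sequence $(4,3,2,\ldots,2,1)$ if $t=1$ (so $\mathrm{irr}_t(G)=4n-6$), while $t=2$ gives $h=2$ and $\triangle(G)\geq 5$ gives $h\geq\triangle(G)-3\geq 2$, in both of which branch-transformation (Lemma \ref{lem2}) is applied a controlled number of times to reduce $G$ to the sequence $(4,3,2,\ldots,2,1)$, giving $\mathrm{irr}_t(G)>4n-6$; and when $s\geq 3$, the case $s+\triangle(G)=6$ forces the sequence $(3,3,3,2,\ldots,2,1)$ (so $\mathrm{irr}_t(G)=4n-10$), while $s+\triangle(G)\geq 7$ gives $h\geq\triangle(G)+s-5\geq 2$ and branch-transformation reduces $G$ to $(3,3,3,2,\ldots,2,1)$, giving $\mathrm{irr}_t(G)>4n-10$.

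Assembling the cases proves both parts at once: for $n\geq 5$ one has $4n-10>2n-4$ and $4n-6>2n-4$, so every $G\in\Theta_n$ satisfies $\mathrm{irr}_t(G)\geq 2n-4$ with equality exactly when the degree sequence is $(3,3,2,\ldots,2)$, which is (1); and once that sequence is excluded, the smallest remaining value is $4n-10$, attained exactly for $(3,3,3,2,\ldots,2,1)$ (all other cases give $\geq 4n-6>4n-10$, or a strict inequality coming from Lemma \ref{lem2}), which is (2). Since $\mathrm{irr}_t$ depends only on the degree sequence, the explicit evaluations $\mathrm{irr}_t(3,3,2,\ldots,2)=2n-4$, $\mathrm{irr}_t(4,3,2,\ldots,2,1)=4n-6$, $\mathrm{irr}_t(3,3,3,2,\ldots,2,1)=4n-10$ are each done once.

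The work is almost entirely bookkeeping; the one point that genuinely must be checked rather than quoted from Theorem \ref{thm13} is that in each non-terminal sub-case branch-transformation really applies to $G$, i.e. that from $\sum_{v}d_G(v)=2(n+1)$ one gets $h$ large enough to make the required number of moves, and that the vertex one transforms from has degree $\geq 3$ and hosts a hanging tree. Fact (c) above together with the sub-case arithmetic ($h\geq\triangle(G)-3$ when $s=2$, $h\geq\triangle(G)+s-5$ when $s\geq 3$) is exactly what supplies this, and I expect this verification, not any new idea, to be the only obstacle, handled precisely as in Theorem \ref{thm13}.
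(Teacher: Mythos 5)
Your proposal is correct and follows exactly the route the paper intends: the paper proves Theorem \ref{thm14} by declaring it "the same proof as Theorem \ref{thm13}", and your write-up simply makes explicit the one fact that justifies the transfer, namely that the core $\theta(p,q,l)$, like $\infty(p,q,l)$ with $l\geq 2$, has degree sequence $(3,3,2,\ldots,2)$, so the quantities $s$, $h$, $t$, $\triangle(G)$ satisfy the same constraints and the same case analysis with Lemma \ref{lem2} applies verbatim. The arithmetic checks ($2n-4$, $4n-6$, $4n-10$ for the three relevant degree sequences, and the bounds $h\geq\triangle(G)-3$ and $h\geq\triangle(G)+s-5$) all agree with the paper's.
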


\subsection{The graph with minimal total irregularity in $\mathcal{B}_n$}
\hskip.6cm By Theorems \ref{thm12}-\ref{thm14}, we can determine   the minimal, the second minimal,
the third minimal total irregularity of the bicyclic graphs  on $n$ vertices immediately.

\begin{thm}\label{thm15}
Let $n\geq 7$, $G\in \mathcal{B}_n$.

{\rm (1) } $\rm irr_t$$(G)\geq 2n-4$, and the equality holds if and only if  the degree sequence of $G$ is $(3,3,2,\ldots,2)$.

{\rm (2) } If $(3,3,2,\ldots,2)$  is not the degree sequence of $G$, then $\rm irr_t$$(G)\geq 2n-2$, and the equality holds if and only if  the degree sequence of $G$ is $(4,2,\ldots,2)$.

{\rm (3) } If $(3,3,2,\ldots,2)$ and $(4,2,\ldots,2)$ are not the degree sequence of $G$, then $\rm irr_{t}$$(G)\geq 4n-10$, and the equality holds if and only if
 the degree sequence of $G$ is $(3,3,3,2,\ldots,2,1)$.
\end{thm}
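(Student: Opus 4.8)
The plan is to use the partition $\mathcal{B}_n = B_n^+ \cup B_n^{++} \cup \Theta_n$ recorded above, feed each piece into the corresponding structure theorem (Theorem~\ref{thm12} for $B_n^+$, Theorem~\ref{thm13} for $B_n^{++}$, Theorem~\ref{thm14} for $\Theta_n$), and then simply compare the four numerical thresholds $2n-4 < 2n-2 < 4n-10 < 4n-6$ that those theorems produce. All three parts then follow by the same template: over the class of bicyclic graphs with certain degree sequences excluded, the infimum of $\mathrm{irr}_t$ is the smallest threshold not yet ruled out, and the extremal graphs are exactly those realizing the degree sequence that attains it.

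Before running that template I would record two structural observations that tell us which degree sequences can live in which of the three families. First, every $G\in B_n^+$ is an $\infty(p,q,1)$ with trees attached, so the vertex where the two cycles meet has degree at least $4$; hence $\triangle(G)\ge 4$ for all $G\in B_n^+$, and in particular $(3,3,2,\dots,2)$ and $(3,3,3,2,\dots,2,1)$ are not degree sequences of graphs in $B_n^+$. Second, every $G\in B_n^{++}\cup\Theta_n$ has at least two vertices of degree $\ge 3$, so $(4,2,\dots,2)$ is not the degree sequence of any graph in $B_n^{++}\cup\Theta_n$.

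Now for part (1): if $G\in B_n^+$ then $\mathrm{irr}_t(G)\ge 2n-2>2n-4$ by Theorem~\ref{thm12}(1), and if $G\in B_n^{++}\cup\Theta_n$ then $\mathrm{irr}_t(G)\ge 2n-4$ by Theorems~\ref{thm13}(1) and \ref{thm14}(1), with equality exactly for degree sequence $(3,3,2,\dots,2)$; combining, $\mathrm{irr}_t(G)\ge 2n-4$ on $\mathcal{B}_n$ with equality iff the degree sequence is $(3,3,2,\dots,2)$. For part (2), assuming that sequence excluded: $G\in B_n^+$ gives $\mathrm{irr}_t(G)\ge 2n-2$ by Theorem~\ref{thm12}(1), while $G\in B_n^{++}\cup\Theta_n$ now gives $\mathrm{irr}_t(G)\ge 4n-10$ by Theorems~\ref{thm13}(2) and \ref{thm14}(2), and $4n-10>2n-2$ since $n\ge7$; hence $\mathrm{irr}_t(G)\ge 2n-2$ with equality iff the degree sequence is $(4,2,\dots,2)$. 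For part (3), assuming $(3,3,2,\dots,2)$ and $(4,2,\dots,2)$ both excluded: $G\in B_n^+$ gives $\mathrm{irr}_t(G)\ge 4n-6$ by Theorem~\ref{thm12}(2), and $G\in B_n^{++}\cup\Theta_n$ gives $\mathrm{irr}_t(G)\ge 4n-10$ by Theorems~\ref{thm13}(2) and \ref{thm14}(2) with equality exactly for $(3,3,3,2,\dots,2,1)$; since $4n-10<4n-6$, the combined bound is $4n-10$ with equality iff the degree sequence is $(3,3,3,2,\dots,2,1)$.

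The only point requiring care — and the step I expect to be the main (though modest) obstacle — is verifying that equality in each combined bound is attained inside exactly one of the three families: this is where the hypothesis $n\ge 7$ enters (to make $4n-10$ strictly exceed $2n-2$, so that a $B_n^{++}$- or $\Theta_n$-graph cannot tie the $B_n^+$ minimum in part (2)), and where the two structural observations are needed (so that the degree sequence attaining each bound can only be realized in the family to which Theorems~\ref{thm12}--\ref{thm14} attribute it). Once these are in place, nothing remains but bookkeeping of inequalities already established.
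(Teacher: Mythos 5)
Your proposal is correct and follows exactly the route the paper takes: the paper's entire ``proof'' of Theorem~\ref{thm15} is the one-line remark that it follows immediately from Theorems~\ref{thm12}--\ref{thm14} via the partition $\mathcal{B}_n=B_n^+\cup B_n^{++}\cup \Theta_n$, and your write-up is simply the careful version of that comparison of thresholds, including the structural observations (every graph in $B_n^+$ has $\triangle\ge 4$, every graph in $B_n^{++}\cup\Theta_n$ has at least two vertices of degree $\ge 3$) that the paper leaves implicit.
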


\section{Open problem for further research}
\hskip.6cm By the results of Sections 3-5, we know the minimal irregularity of simple connected graphs on $n$ vertices is zero,
and the corresponding extremal graphs are  regular graphs.
Furthermore, we suppose $2n-4$ is the second minimal and $2n-2$ is the third minimal.
\begin{con}\label{cor16}
Let $G$ be a simple connected graph with $n$ vertices. If $G$ is not a regular graph, then $\rm irr_{t}$$(G)\geq 2n-4$.
\end{con}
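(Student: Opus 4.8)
The plan is to express the total irregularity through the degree profile of $G$ and then dispose of the conjecture by cases on $\Delta(G)-\delta(G)$, reducing everything to an elementary inequality whose sharp case is governed by a parity condition. For nonnegative integers $x\le y$ the difference $y-x$ counts the integers $k$ with $x<k\le y$, so summing $|d_{G}(u)-d_{G}(v)|$ over unordered vertex pairs reorganizes as
\[
\mathrm{irr}_{t}(G)=\sum_{k\ge 1}a_{k}(n-a_{k}),\qquad a_{k}:=\big|\{v\in V:d_{G}(v)\ge k\}\big|,
\]
since for each $k$ the pairs whose imbalance ``crosses'' $k$ number exactly $a_{k}(n-a_{k})$. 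As $G$ is connected and $n\ge 2$, $a_{1}=n$, so $k=1$ contributes nothing; and as $G$ is not regular, $\delta:=\delta(G)<\Delta:=\Delta(G)$, with the only nonzero terms coming from $\delta+1\le k\le\Delta$, each satisfying $1\le a_{k}\le n-1$ and hence $a_{k}(n-a_{k})\ge n-1$. Therefore $\mathrm{irr}_{t}(G)\ge(\Delta-\delta)(n-1)$.

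If $\Delta-\delta\ge 2$ this already gives $\mathrm{irr}_{t}(G)\ge 2(n-1)\ge 2n-4$, with room to spare, so only the case $\Delta-\delta=1$ remains. Then every degree lies in $\{\delta,\delta+1\}$; writing $b$ for the number of vertices of degree $\delta+1$, the sum above has the single nonzero term $a_{\delta+1}(n-a_{\delta+1})=b(n-b)$. An elementary check shows $b(n-b)\ge 2n-4$ precisely when $2\le b\le n-2$ (the two sides agree at $b=2$ and $b=n-2$), while $b=1$ and $b=n-1$ each give $\mathrm{irr}_{t}(G)=n-1$. Thus the conjecture is equivalent to the claim that a connected non-regular graph with degree set $\{\delta,\delta+1\}$ cannot have exactly one vertex of anomalous degree. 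I expect this last step to be the main obstacle; in fact I believe it is exactly where the statement fails.

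The decisive input is the handshake identity: $b(\delta+1)+(n-b)\delta=b+n\delta=2|E|$, so $b\equiv n\delta\pmod 2$. If $n$ is even this forces $b$ to be even, hence $b\ne 1$ and $b\ne n-1$; so $2\le b\le n-2$ and the conjecture holds, with $2n-4$ attained (e.g.\ by $P_n$ and by the bicyclic graphs of Theorem~\ref{thm15}(1)). If $n$ is odd, however, the parity constraint is met by $b=1$ (take $\delta$ odd) or by $b=n-1$ (take $\delta$ even), and such degree sequences are realized by connected simple graphs: deleting a non-bridge edge of a cubic graph on $n-1$ vertices and joining its two endpoints to a new vertex produces a connected graph with degree sequence $(3,3,\dots,3,2)$ and $\mathrm{irr}_{t}(G)=n-1<2n-4$ for every odd $n\ge 5$. (The branch-transformation framework of Section~2 cannot reach these examples, since it needs a pendant vertex while the obstruction lives entirely in the range $\delta(G)\ge 2$.) Hence I would expect the method above to prove, rather than Conjecture~\ref{cor16} as stated, the following refinement: a connected non-regular graph on $n$ vertices satisfies $\mathrm{irr}_{t}(G)\ge 2n-4$ when $n$ is even and $\mathrm{irr}_{t}(G)\ge n-1$ when $n$ is odd, and both bounds are sharp.
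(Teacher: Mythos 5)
This statement is labelled as a \emph{conjecture} and the paper offers no proof of it, so there is nothing to compare your argument against; what you have written must be judged on its own. Your argument is correct, and it in fact \emph{settles} the conjecture rather than proving it: the claim holds for even $n$ and is false for every odd $n\ge 5$. The level-set identity $\mathrm{irr}_t(G)=\sum_{k\ge1}a_k(n-a_k)$ with $a_k=|\{v:d_G(v)\ge k\}|$ is valid (each unordered pair contributes $1$ to exactly the levels $k$ strictly between its two degrees, and the pairs crossing level $k$ number $a_k(n-a_k)$); it reproduces every extremal value in the paper, e.g.\ $(3,2,\dots,2,1,1,1)$ gives $3(n-3)+(n-1)=4n-10$ as in Theorem~\ref{thm6}. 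The reduction to $\Delta-\delta=1$, the observation that $b(n-b)\ge 2n-4$ exactly for $2\le b\le n-2$, and the handshake parity $b\equiv n\delta\pmod 2$ are all sound, so the even case is a complete proof. Your counterexample for odd $n\ge5$ checks out: for $n=5$, take $K_4$ minus an edge and join a new vertex to the two endpoints; the result is simple, connected, non-regular with degree sequence $(3,3,3,3,2)$ and $\mathrm{irr}_t=4<6=2n-4$, and the general construction from a connected cubic graph on the even number $n-1$ of vertices works the same way, giving $\mathrm{irr}_t=n-1$. You are also right that the branch-transformation machinery of Section~2 is blind to these graphs, since it requires pendant vertices while the extremal obstruction has $\delta\ge2$. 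The one thing I would ask you to add is a short argument (or reference) that connected cubic graphs exist for every even order $\ge4$ and always contain a non-bridge edge (any edge on a cycle works, and a connected cubic graph is not a tree); with that, your refined statement --- $\mathrm{irr}_t(G)\ge 2n-4$ for even $n$, $\mathrm{irr}_t(G)\ge n-1$ for odd $n$, both sharp --- is fully established and supersedes Conjecture~\ref{cor16}.
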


\end{document}